\numberwithin{equation}{section}
\def \al{\alpha}
\def \be{\beta}
\def \ga{\gamma}
\def \de{\delta}
\def \er{\varepsilon}
\def \ka{\varkappa}
\def \la{\lambda}
\def \si{\sigma}
\def \oo{\omega}
\def \D{\Delta}
\def \N{\mathbb{N}}
\def \R{\mathbb{R}}
\def \T{\mathbb{T}}
\def\n{\nabla}
\def\dd{\partial}
\def\div{\operatorname{div}}
\def\1{1\!\!\!\!1}
\def\det{\operatorname{det}}
\def\im{\operatorname{Im}}
\def\re{\operatorname{Re}}
\DeclareMathOperator{\Texp}{T-exp}
\theoremstyle{plain}
\newtheorem{theorem}{\bf Theorem}[section]
\newtheorem{lemma}[theorem]{\bf Lemma}
\newtheorem{cor}[theorem]{\bf Corollary}
\theoremstyle{definition}
\newtheorem{defi}[theorem]{\bf Definition}
\theoremstyle{remark}
\newtheorem{rem}[theorem]{\bf Remark}
\renewcommand{\le}{\leqslant}
\renewcommand{\ge}{\geqslant}
\renewcommand{\qed}{\vrule height7pt width5pt depth0pt}
\title{Non-uniqueness~of~Leray-Hopf~solutions~for~a~dyadic~model}
\author{N.~Filonov, P.~Khodunov
\thanks{The work is supported by grant RFBR 17-01-00099-a.}}
\date{}
\begin{document}

\maketitle

\begin{abstract}
The dyadic model $\dot u_n + \lambda^{2n}u_n - \lambda^{\beta n}u_{n-1}^2 + \lambda^{\beta(n+1)}u_nu_{n+1} = f_n$,
$u_n(0)=0$, is considered. 
It is shown that in the case of non-trivial right hand side the system 
can have two different Leray-Hopf solutions.
\footnote{Keywords:
systems of ordinary differential equations, Navier-Stokes equations,
dyadic model, non-uniqueness of solutions.}
\end{abstract}

\section*{Introduction}

Consider the following system of ordinary differential equations
\begin{equation}\label{system}
\left\{\begin{array}{l}
\dot u_n(t) + \lambda^{2n}u_n(t) - \lambda^{\beta n}u_{n-1}^2(t) + \lambda^{\beta(n+1)}u_n(t)u_{n+1}(t) = f_n(t),~~t\in[0;T],\\
u_n(0) = a_n,~~~n=1,2,\ldots.
\end{array}\right.
\end{equation}
Here $u_0 \equiv 0$; $\la > 1$, $\be > 0$ are parameters, $u_n, f_n$ are real valued functions.
We assume that initial data $\{a_n\} \in l_2$; 
and right-hand sides $f_n \in L_2(0,T)$, the behaviour of $f_n$ while $n \to \infty$ will be described later.

System \eqref{system} is similar to the system of the Navier-Stokes equations
\begin{equation}
\label{02}
\begin{cases}
\dd_t u - \D u + P \left((u, \n) u\right) = f 
\quad \text{in}\ \  [0, T] \times \T^d, \\
\div u = 0, \left.u\right|_{t=0} = a(x) .
\end{cases}
\end{equation}
Here $\T^d$ is a $d$-dimensional torus, $P$ is an orthogonal projector in $L_2(\T^d)$ on the subspace of solenoidal functions.
Both systems can be written in an abstract way
\begin{equation}
\label{03}
\begin{cases}
\dot u + A u + B (u, u) = f, \quad t \in [0, T] , \\
u (0) = a .
\end{cases}
\end{equation}
Function $u(t)$ here takes values in a Hilbert space ${\mathcal H}$, where
${\mathcal H}_{\eqref{system}} = l_2$ in the case \eqref{system}, and
$$
{\mathcal H}_{\eqref{02}} = \{u \in L_2 (\T^d, \R^d) : \div u = 0\}
$$ 
for the system \eqref{02}.
$A$ is a self-adjoint non-negative unbounded operator in ${\mathcal H}$,
$$
A_{\eqref{system}} \{u_n\} = \{\la^{2n} u_n\},
$$ 
and $A_{\eqref{02}} = - \D$.
Finally, $B$ is a bilinear unbounded map
$B : {\mathcal H} \times {\mathcal H} \to {\mathcal H}$,
$$
B_{\eqref{system}} (\{u_n\},\{v_n\}) = - \lambda^{\beta n}u_{n-1}(t)v_{n-1}(t) + \lambda^{\beta(n+1)}u_n(t)v_{n+1}(t),
$$
$$
B_{\eqref{02}}(u,v) = P((u, \nabla) v).
$$
The map $B$ has two important properties.

1) Ortogonality:
$$
\left(B(u,v), v\right)_{\mathcal H} = 0
$$
for a dense set of "good"\ $u$ and $v$.
For the system \eqref{system} we have
$$
\sum_{n=1}^\infty \left( - \la^{\be n} u_{n-1} v_{n-1} v_n 
+ \la^{\be (n+1)} u_n v_{n+1} v_n \right) = 0, 
$$
if all the series converge.
For \eqref{02} using the condition $\div u = 0$ one can get
$$
\int_{\T^d} \sum_{j,k=1}^3 u_j \dd_j v_k v_k\, dx 
= -\frac12 \int_{\T^d} \sum_{j=1}^3 \dd_j u_j |v|^2 \, dx = 0,
$$
if all the integrals converge.

2) Estimate
$$
\|B(u,u)\|_{\mathcal H} \le 
C \|A^{\si_1} u\|_{\mathcal H} \|A^{\si_2} u\|_{\mathcal H} .
$$
Exponents $\si_1$ and $\si_2$ can take any value, but their sum is fixed.
For the system \eqref{system} 
$$
\si_1 + \si_2 = \frac\be2.
$$
And for \eqref{02} we use the Cauchy inequality
$$
\int_{\T^d} \left|(u,\n)u\right|^2 dx \le
\left(\int_{\T^d} |u|^4 dx\right)^{1/2} \left(\int_{\T^d} |\n u|^4 dx\right)^{1/2}
$$
and embedding theorems
$$
W_2^{d/4} \subset L_4, \qquad W_2^{d/4 + 1} \subset W^1_4 .
$$
Thus, one could take 
$$
\si_1 = \frac{d}8, \quad \si_2 = \frac{d+4}8, 
\quad \text{and get} \quad \si_1 + \si_2 = \frac{d+2}4 .
$$

We consider the system \eqref{system} to be a model for Navier-Stokes equations. 
The space dimension $d=2$ in Navier-Stokes system corresponds to the value of the parameter $\be = 2$ in dyadic model \eqref{system}, 
and the dimension $d=3$ corresponds to the value $\be = 5/2$.
The explicit value of the parameter $\la > 1$ has no importance for us.

System \eqref{system} originates from the work \cite{DN} as a model of turbulence in hydrodynamics.

\begin{defi}
Suppose  $\{a_n\} \in l_2$, $f_n \in L_2(0,T)$ for all $n$, and 
$\sum_{n=1}^\infty \la^{-2n} \int_0^T f_n(t)^2 dt < \infty$.
A sequence of functions $\{u_n(t)\}_{n=1}^\infty$ is called a Leray-Hopf solution for system \eqref{system} if
\begin{itemize}
\item $u_n \in W_2^1 (0, T)$ and \eqref{system} hold for all $n$;
\item $\displaystyle\sup_{t\in [0,T]} \sum_{n=1}^\infty u_n(t)^2 < \infty$, 
$\displaystyle\sum_{n=1}^\infty \la^{2n} \int_0^T u_n(t)^2 dt < \infty$;
\item the estimate 
\begin{equation}
\label{leray}
\sum_{n=1}^\infty \left( u_n(t)^2 + 2 \la^{2n} \int_0^t u_n (\tau)^2 d\tau \right)
\le \sum_{n=1}^\infty \left(a_n^2 + 2 \int_0^t f_n (\tau) u_n (\tau)\, d\tau\right) 
\end{equation}
holds for all $t \in [0,T]$.
\end{itemize}
\end{defi}
It is easy to prove that such solutions always exist.

\begin{theorem}
\label{t01}
Suppose $\la>1$, $\be>0$,   $\{a_n\} \in l_2$,  
$\sum_{n=1}^\infty \la^{-2n} \int_0^T f_n(t)^2 dt < \infty$.
Then there esists a Leray-Hopf solution to the system \eqref{system}.
\end{theorem}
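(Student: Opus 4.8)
The plan is to construct a Leray-Hopf solution as a limit of finite-dimensional Galerkin approximations. For each $N$ I would consider the truncated system obtained by keeping only the first $N$ equations, setting $u_n \equiv 0$ for $n > N$, so that the coupling term $\la^{\be(N+1)}u_N u_{N+1}$ drops out of the $N$-th equation. This yields a finite system of ODEs with a quadratic nonlinearity and a smooth right-hand side; by the Cauchy–Peano or Picard existence theorem one obtains a local solution $\{u_n^{(N)}\}_{n=1}^N$. To promote local existence to global existence on $[0,T]$, I would derive the \emph{a priori} energy estimate by multiplying the $n$-th equation by $u_n^{(N)}$ and summing over $n$. Here the orthogonality property (1) of $B$ is crucial: the nonlinear terms telescope and cancel, leaving
\begin{equation*}
\frac12 \frac{d}{dt} \sum_{n=1}^N u_n^{(N)}(t)^2 + \sum_{n=1}^N \la^{2n} u_n^{(N)}(t)^2
= \sum_{n=1}^N f_n(t) u_n^{(N)}(t).
\end{equation*}
Integrating in time and using $\sum \la^{-2n}\int_0^T f_n^2 < \infty$ together with the Cauchy–Schwarz inequality applied to $f_n u_n = (\la^{-n}f_n)(\la^n u_n)$ gives a uniform bound on $\sup_t \sum_n u_n^{(N)}(t)^2$ and on $\sum_n \la^{2n}\int_0^T u_n^{(N)}(t)^2\,dt$, independent of $N$. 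This precludes blow-up and yields global solutions satisfying the Leray energy \emph{inequality} \eqref{leray} with equality at the Galerkin level.

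Next I would extract a limiting solution by compactness. The uniform energy bound shows that $\{u_n^{(N)}\}_N$ is bounded in $L_\infty(0,T;l_2)$ and that each fixed component $u_n^{(N)}$ is bounded in $L_2(0,T)$; moreover, using the equation itself one checks that $\dot u_n^{(N)}$ is bounded in $L_2(0,T)$ for each fixed $n$, so each component sequence is bounded in $W_2^1(0,T)$ and hence precompact in $C[0,T]$. By a diagonal argument over $n$ I would pass to a subsequence along which $u_n^{(N)} \to u_n$ uniformly on $[0,T]$ for every $n$, with $\dot u_n^{(N)} \rightharpoonup \dot u_n$ weakly in $L_2(0,T)$. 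The uniform convergence of the finitely many components entering each fixed equation lets me pass to the limit in \eqref{system}, so each limit function satisfies the $n$-th equation in $W_2^1(0,T)$.

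Finally I would verify that the limit is genuinely Leray-Hopf, i.e. that it satisfies the three bullet conditions of the definition. The second condition (the $\sup_t$ and the dissipation bounds) follows from the uniform Galerkin estimates together with Fatou's lemma / weak lower semicontinuity of the $l_2$ norm. The energy inequality \eqref{leray} is the delicate point: the Galerkin identity holds with equality, but under weak limits only lower semicontinuity survives, so the left-hand side of \eqref{leray} can only decrease in the limit while the right-hand side (being linear in $u$ through the convergent term $\int_0^t f_n u_n$) passes to the limit exactly; this yields precisely the inequality \eqref{leray}. The main obstacle I anticipate is controlling the nonlinear coupling term in the limit: because $B$ is unbounded, one must ensure that the convergence is strong enough on the components actually appearing in each equation, which is exactly why the per-component $W_2^1$ compactness (rather than mere weak $l_2$ convergence) is needed, and why one passes to the limit equation-by-equation rather than globally.
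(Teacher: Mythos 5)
Your proposal is correct and follows essentially the same route as the paper's proof: Galerkin truncation with local existence by Picard iteration, global extension via the energy identity coming from the orthogonality cancellation, per-component $W_2^1$ compactness with a diagonal subsequence, and passage to the limit in the energy identity using lower semicontinuity on the left and weak convergence (in the weighted $l_2$-in-time norm) of the forcing term $\sum_n \int_0^t f_n u_n^{(N)}$ on the right. The only point stated more loosely than in the paper is the justification that the infinite sum $\sum_n \int_0^t f_n u_n^{(N)}\,d\tau$ converges to its limit, which requires the uniform dissipation bound to control the tail uniformly in $N$ — exactly the weak-convergence step the paper makes explicit.
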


\begin{rem}
The condition $\sum_{n=1}^\infty \la^{-2n} \int_0^T f_n(t)^2 dt < \infty$
for system \eqref{system} 
is analogue of the condition $f \in L_2((0,T), W_2^{-1})$ for Navier-Stokes equations \eqref{02}.
\end{rem}

Theorem \ref{t01} is proved in \S \ref{s1}. 
In the previous work of the first author the uniqueness of Leray-Hopf solution was proved 
under the following assumptions.

\begin{theorem}[\cite{F}]
\label{t02}
Suppose $\la > 1$, $f_n(t) \equiv 0$.
Suppose $\{a_n\} \in l_2$ if $\be \le 2$, and
$a_n = o (\la^{(2-\be)n})$, $n \to \infty$, if $\be > 2$.
Then the Leray-Hopf solution to the system \eqref{system} is unique.
\end{theorem}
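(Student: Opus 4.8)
The plan is to compare two Leray-Hopf solutions through an energy estimate for their difference. Suppose $u$ and $v$ both solve \eqref{system} with $u_n(0)=v_n(0)=a_n$ and $f_n\equiv 0$, and set $w_n=u_n-v_n$. Writing the system abstractly as in \eqref{03} and using bilinearity of $B$ we have $B(u,u)-B(v,v)=B(u,w)+B(w,v)$, so $w$ satisfies $\dot w+Aw+B(u,w)+B(w,v)=0$ with $w(0)=0$. Taking the inner product with $w$ and invoking the orthogonality property $(B(u,w),w)_{\mathcal H}=0$, one obtains
\begin{equation}
\tfrac12\tfrac{d}{dt}\|w\|^2+\|A^{1/2}w\|^2=-(B(w,v),w)_{\mathcal H}.
\end{equation}
Everything then reduces to absorbing the single trilinear term on the right (the manipulation being justified for a.e.\ $t$ since $w$ inherits the Leray-Hopf bounds).

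For $\be\le 2$ I would estimate $B$ with $\si_1=\si_2=\be/4\le 1/2$ together with the interpolation $\|A^{\be/4}z\|\le\|z\|^{1-\be/2}\|A^{1/2}z\|^{\be/2}$ applied to $z=w$ and $z=v$. This yields $|(B(w,v),w)|\le C\|w\|^{2-\be/2}\|A^{1/2}w\|^{\be/2}\|v\|^{1-\be/2}\|A^{1/2}v\|^{\be/2}$, and Young's inequality splits off $\tfrac12\|A^{1/2}w\|^2$, leaving $C\|w\|^2 g(t)$ with $g=\|v\|^{2(2-\be)/(4-\be)}\|A^{1/2}v\|^{2\be/(4-\be)}$. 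Because $\be\le 2$ keeps the exponent $2\be/(4-\be)\le 2$, the Leray-Hopf bounds $\sup_t\|v\|\le\|a\|$ and $\int_0^T\|A^{1/2}v\|^2\,dt<\infty$ give $g\in L_1(0,T)$, and Gr\"onwall's inequality with $w(0)=0$ forces $w\equiv 0$.

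For $\be>2$ the exponent sum $\si_1+\si_2=\be/2>1$ exceeds what the dissipation $\|A^{1/2}w\|^2$ can supply, so I would instead exploit extra regularity of $v$. Put $\psi_n=\la^{(\be-2)n}v_n$; the hypothesis $a_n=o(\la^{(2-\be)n})$ says precisely that $\psi_n(0)\to 0$. Using the pointwise bound $|v_n(t)|\le Y(t)\la^{-(\be-2)n}$ with $Y(t)=\sup_n|\psi_n(t)|$, a direct estimate of the two terms of $(B(w,v),w)$ (Cauchy--Schwarz in the first, the obvious bound in the second) gives $|(B(w,v),w)|\le C\,Y(t)\|A^{1/2}w\|^2$ with $C=\la^2+\la^{\be-1}$. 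Splitting $v$ into modes $\le N$ and $>N$ refines this to $|(B(w,v),w)|\le C\big(\sup_{n>N}|\psi_n(t)|\big)\|A^{1/2}w\|^2+C_N\|w\|^2$, where $C_N\le C\la^{\be N}\|a\|$ comes from the finitely many low modes. If the tail $\sup_t\sup_{n>N}|\psi_n(t)|$ can be made $\le 1/(2C)$ by choosing $N$ large, the first term is absorbed into $\|A^{1/2}w\|^2$ and Gr\"onwall again gives $w\equiv 0$.

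The main obstacle is therefore the a priori regularity estimate for $\be>2$: showing that the weighted quantities $\psi_n=\la^{(\be-2)n}v_n$ of a Leray-Hopf solution have uniformly (in $t\in[0,T]$) small high-frequency tails, propagated from $\psi_n(0)\to 0$. In these variables the equation reads $\dot\psi_n=\la^{2n}\big(-\psi_n+\la^{2(\be-2)}\psi_{n-1}^2-\la^2\psi_n\psi_{n+1}\big)$, in which the very strong dissipation $-\la^{2n}\psi_n$ at high frequencies competes with the upward nonlinear cascade. I expect to control this by a weighted energy (or maximum-principle) argument: the tail energy $P_N=\sum_{n>N}\psi_n^2$ satisfies an inequality of the form $\dot P_N\le-c\la^{2N}P_N+(\text{boundary forcing from mode }N)$, so that the strong dissipation keeps $\sup_t P_N(t)$ comparable to its initial value, which tends to $0$ as $N\to\infty$. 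Making this rigorous---in particular controlling the boundary forcing uniformly in $t$ and ensuring the nonlinear tail term is genuinely dominated by dissipation---is the delicate step, and it is exactly here that the decay hypothesis $a_n=o(\la^{(2-\be)n})$ is indispensable.
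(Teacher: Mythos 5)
First, note that Theorem~\ref{t02} is not proved in this paper at all: it is quoted from \cite{F}, and the only indication given here of how that proof goes is the remark that \cite{F} (like \cite{BMR, Ch}) relies essentially on the conservation-of-positivity property of the unforced system. Your argument for $\be\le 2$ is essentially sound and is close in spirit to the paper's own proof of Theorem~\ref{t03} in \S\ref{s2}: there the trilinear term is absorbed by splitting $v$ into finitely many low modes (handled by Gr\"onwall) and a high-frequency tail made uniformly small via Lemma~\ref{l21}, whereas you absorb it by interpolation plus Young's inequality, producing an $L_1$ Gr\"onwall weight $\|v\|^{2(2-\be)/(4-\be)}\|A^{1/2}v\|^{2\be/(4-\be)}$; both routes work precisely because $\be\le2$ keeps the exponent on $\|A^{1/2}v\|$ at most $2$.

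For $\be>2$, however, your proposal has a genuine gap, and it is the whole theorem. Everything you do hinges on the a priori bound $\sup_{t\in[0,T]}\sup_{n>N}\la^{(\be-2)n}|v_n(t)|\to0$ as $N\to\infty$, i.e.\ on propagating the initial decay $a_n=o(\la^{(2-\be)n})$ forward in time for an arbitrary Leray--Hopf solution. You state this as "the delicate step" and sketch a tail-energy inequality $\dot P_N\le -c\la^{2N}P_N+(\text{boundary forcing})$, but this sketch does not close. In the weighted variables $\psi_n=\la^{(\be-2)n}v_n$ the skew-symmetric cancellation of the nonlinearity is destroyed, and $\dot P_N$ contains the cubic flux $\sum_{n>N}\la^{2n}\la^{2\be-4}\psi_{n-1}^2\psi_n$, which can be dominated by the dissipation $\sum_{n>N}\la^{2n}\psi_n^2$ only if $\sup_n|\psi_n|$ is \emph{already} known to be small --- a circular bootstrap whose base case is missing, since at $t=0$ only the tail of $\{\psi_n\}$ is small while the low modes can pump energy upward. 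Worse, finiteness of $Y(t)=\sup_n|\psi_n(t)|$ for $t>0$ is itself not available from the Leray--Hopf bounds (for $\be>3$, Theorem~\ref{t05}(2) shows that strong norms of solutions can blow up, so decay does not propagate by soft arguments). The actual proof in \cite{F} circumvents this by exploiting that, with $f_n\equiv0$, positivity is conserved by the flow, a structural ingredient entirely absent from your scheme; indeed, the present paper's Theorem~\ref{t04} shows that once a forcing term breaks positivity, uniqueness for $\be>2$ genuinely fails, so no positivity-free energy argument of the kind you propose can succeed.
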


It is easy to prove that for $\be \le 2$ the Leray-Hopf solution is unique for a non-zero right-hand side as well.

\begin{theorem}
\label{t03}
Let $\la>1$, $\be\le 2$,   $\{a_n\} \in l_2$,  
$\sum_{n=1}^\infty \la^{-2n} \int_0^T f_n(t)^2 dt < \infty$.
Then the Leray-Hopf solution to the system \eqref{system} is unique.
\end{theorem}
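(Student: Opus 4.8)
The plan is to compare two Leray-Hopf solutions $\{u_n\}$ and $\{v_n\}$ of \eqref{system} sharing the same data and to run an energy estimate for their difference $w_n = u_n - v_n$. Since each component lies in $W_2^1(0,T)$ and satisfies \eqref{system}, subtracting the equations and writing the nonlinearity through the bilinear map $B$ gives $\dot w + Aw + B(w,u) + B(v,w) = 0$, where I use the splitting $B(u,u) - B(v,v) = B(w,u) + B(v,w)$. I will take the $l_2$ inner product with $w$. The orthogonality property $(B(v,w),w)_{l_2} = 0$ removes the most dangerous contribution (the one with two $w$'s at shifted indices, which would otherwise produce an unabsorbable $\|A^{1/2}w\|_{l_2}^2$), leaving
\begin{equation*}
\frac12\frac{d}{dt}\|w\|_{l_2}^2 + \|A^{1/2}w\|_{l_2}^2 = -\left(B(w,u),w\right)_{l_2}.
\end{equation*}

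The heart of the argument is the trilinear estimate
\begin{equation*}
\left|\left(B(w,u),w\right)_{l_2}\right| \le C(\la,\be)\,\|w\|_{l_2}\,\|A^{1/2}w\|_{l_2}\,\|A^{1/2}u\|_{l_2},
\end{equation*}
which holds precisely when $\be \le 2$. To prove it I expand $(B(w,u),w)_{l_2}$ into the two sums $-\sum_n \la^{\be n} w_{n-1}u_{n-1}w_n$ and $\sum_n \la^{\be(n+1)}w_n^2 u_{n+1}$, and in each I split the two $w$-factors so that one carries the weight $\la^{n}$ (producing $\|A^{1/2}w\|_{l_2}$) and the other stays unweighted (producing $\|w\|_{l_2}$). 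After Cauchy--Schwarz the factor not containing $w$ reduces, up to constants, to $\sup_n \la^{(\be-1)n}|u_{n\pm1}|$; inserting the elementary bound $|u_m|\le\la^{-m}\|A^{1/2}u\|_{l_2}$ turns this into $C\,\sup_n \la^{(\be-2)n}\,\|A^{1/2}u\|_{l_2}$, which is finite exactly because $\be-2\le0$. The restriction $\be \le 2$ enters only here: for $\be > 2$ the exponent $(\be-2)n$ grows and the supremum diverges, consistent with the non-uniqueness constructed elsewhere in the paper.

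Inserting this estimate and applying Young's inequality to absorb half of $\|A^{1/2}w\|_{l_2}^2$ into the left-hand side, I obtain $\frac{d}{dt}\|w\|_{l_2}^2 \le g(t)\,\|w\|_{l_2}^2$ with $g(t) = C\|A^{1/2}u(t)\|_{l_2}^2$. Because $u$ is a Leray-Hopf solution, $\int_0^T g(t)\,dt = C\sum_n \la^{2n}\int_0^T u_n(t)^2\,dt < \infty$, so Gronwall's lemma together with $w(0)=0$ (both solutions satisfy $u_n(0)=a_n$) forces $w \equiv 0$.

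The main obstacle I anticipate is not the a priori estimate but its rigorous justification, since Leray-Hopf solutions are only known componentwise in $W_2^1(0,T)$. I would legitimize the energy identity by truncating to the indices $n \le N$, deriving the estimate for the partial energy $\sum_{n\le N} w_n^2$ while tracking the boundary terms at index $N$ generated by the nearest-neighbour nonlinearity, and then letting $N\to\infty$, showing these terms vanish by means of the bounds $\sup_t\sum_n u_n^2 < \infty$ and $\sum_n \la^{2n}\int_0^T u_n^2\,dt < \infty$. The same two bounds ensure that $\|A^{1/2}w(t)\|_{l_2}$ and $\|A^{1/2}u(t)\|_{l_2}$ are finite for almost every $t$, which is what makes the series in $(B(v,w),w)_{l_2}$ converge absolutely and its telescoping to zero valid.
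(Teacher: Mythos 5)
Your proposal is correct, and it takes a genuinely different route from the paper's in how the trilinear term is controlled. The paper also runs an energy argument on the difference (written there as $u^{\pm}=v\pm g$, so that $g$ solves the linear third equation of \eqref{new_system}), but it bounds the cross terms by $\la^{\be n}|v_{n-1}g_{n-1}g_n|\le C_1\la^{2n}\|v_{n-1}\|_C\,(g_{n-1}^2+g_n^2)$ and then relies on a separate statement (Lemma \ref{l21}: for $\be\le 2$ the tails $\sum_{n\ge N}u_n(t)^2$ are small uniformly in $t$) to make the coefficients $\|v_{n\pm1}\|_C$ small for large $n$, absorb those terms into the dissipation $\la^{2n}\int g_n^2$, and close with Gronwall applied to the finitely many remaining modes with a constant coefficient. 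You instead prove the trilinear bound $|(B(w,u),w)|\le C\|w\|\,\|A^{1/2}w\|\,\|A^{1/2}u\|$ --- which is indeed valid precisely for $\be\le2$, via $|u_m|\le\la^{-m}\|A^{1/2}u\|$ and $\la^{(\be-2)m}\le 1$ --- then use Young's inequality and Gronwall with the merely integrable coefficient $\|A^{1/2}u(t)\|^2\in L_1(0,T)$. This weak--strong-uniqueness-style argument is shorter and bypasses Lemma \ref{l21} entirely (at the cost of not obtaining, as the paper does in passing, that $u\in C([0,T];l_2)$); the paper's version is more elementary and localizes the use of $\be\le2$ in the sup-norm smallness of the tails. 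Your closing remarks on rigor identify the right issue: the energy relation must be derived in integral form by truncating at $n\le N$, and the single surviving boundary term $\la^{\be(N+1)}\int_0^t v_N w_N w_{N+1}\,d\tau$ is $O\bigl(\la^{2N}\int_0^T(v_N^2+w_N^2)\,d\tau\bigr)\to 0$ by the Leray--Hopf bounds, which is exactly the computation the paper carries out inside the proof of Lemma \ref{l21}.
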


This theorem is proved in \S \ref{s2}.

The question remains what happens if $\be > 2$ and the right-hand side is non-zero.
Let us formulate our main result.

\begin{theorem}
\label{t04}
Let $\la>1$, $\be>2$,   $a_n = 0$ for all $n$.
There exists $T>0$ and functions $\{ f_n(t)\}$ such that
$\sum_{n=1}^\infty \la^{-2n} \int_0^T f_n(t)^2 dt < \infty$, but system \eqref{system} has two different Leray-Hopf solutions.
\end{theorem}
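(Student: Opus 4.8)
The plan is to look for both solutions in self-similar form, exploiting the discrete scaling symmetry of the homogeneous part of \eqref{system}: a direct check shows that if $\{u_n(t)\}$ solves \eqref{system} with $f\equiv 0$, then so does $\{\lambda^{\beta-2}u_{n+1}(\lambda^{-2}t)\}$. Writing $\gamma=\beta-2>0$, this fixed-point structure suggests the ansatz
\begin{equation*}
u_n(t)=\lambda^{-\gamma n}\,\Psi(\lambda^{2n}t),\qquad f_n(t)=\lambda^{(4-\beta)n}\,F(\lambda^{2n}t),
\end{equation*}
with a single amplitude profile $\Psi$ and forcing profile $F$ of one variable $\xi=\lambda^{2n}t$. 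Substituting into \eqref{system}, every term carries the same factor $\lambda^{(4-\beta)n}$, and the infinite system collapses to a single scalar equation
\begin{equation*}
\Psi'(\xi)+\Psi(\xi)-c_1\,\Psi(\lambda^{-2}\xi)^2+c_2\,\Psi(\xi)\Psi(\lambda^2\xi)=F(\xi),
\end{equation*}
with $c_1,c_2>0$ depending on $\lambda,\beta$ and on the chosen amplitude. The essential feature is that this is a functional-differential equation containing both a delayed argument $\lambda^{-2}\xi$ and an advanced argument $\lambda^2\xi$.

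To match $a_n=0$ I would impose $\Psi(0)=0$ (taking, say, $\Psi\equiv 0$ on $(-\infty,0]$), so that $u_n(0)=0$ for every $n$. The hypothesis $\beta>2$ then enters at exactly two places. For the energy bounds, a direct computation gives $\lambda^{2n}\int_0^T u_n^2\,dt\asymp \lambda^{-2\gamma n}\|\Psi\|_{L_2(0,\infty)}^2$ and $\sup_t\sum_n u_n(t)^2<\infty$ whenever $\Psi\in L_2(0,\infty)$ is bounded, and these sums converge precisely because $\gamma=\beta-2>0$. For admissibility of the forcing,
\begin{equation*}
\sum_{n=1}^\infty\lambda^{-2n}\int_0^T f_n(t)^2\,dt\asymp \|F\|_{L_2(0,\infty)}^2\sum_{n=1}^\infty\lambda^{-(2\beta-4)n},
\end{equation*}
which is finite if and only if $\beta>2$. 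Thus the self-similar framework is admissible exactly in the regime left open by Theorems \ref{t02} and \ref{t03}, and the borderline data level $u_n\asymp\lambda^{-\gamma n}=\lambda^{(2-\beta)n}$ is precisely the one excluded from the uniqueness statement of Theorem \ref{t02}.

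The two solutions are produced from one and the same forcing by exploiting non-uniqueness at the level of the profile equation. Fixing a convenient profile $\Psi_1$ (smooth, vanishing on $(-\infty,0]$, decaying at $+\infty$) and defining $F$ from it, I would then construct a second profile $\Psi_2\not\equiv\Psi_1$ solving the same equation with the same $F$ and the same condition $\Psi_2(0)=0$. The advanced argument $\lambda^2\xi$ is what makes this possible: prescribing $\Psi$ on an initial $\xi$-interval does not determine it further on, so one can let $\Psi_2$ agree with $\Psi_1$ up to some $\xi_*>0$ and branch off beyond it. In the original variables the two solutions $u^{(i)}_n(t)=\lambda^{-\gamma n}\Psi_i(\lambda^{2n}t)$ coincide at $t=0$ (both equal to the zero data) but differ for every $t>0$, since they separate in mode $n$ at time $t=\xi_*\lambda^{-2n}\to 0$.

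The hard part will be the analysis of this scalar functional-differential profile equation: establishing existence of a profile with the prescribed vanishing at $\xi=0$ and $L_2$-decay at $+\infty$, and, above all, producing the second solution $\Psi_2$ carrying the same $F$. I expect this to be handled by a contraction/iteration argument on a weighted space of profiles on $[0,\infty)$, using the dissipative term $\Psi$ to absorb the delayed quadratic term and treating the advanced term perturbatively, with the branching at $\xi_*$ coming from the freedom in the advanced part of the equation. A secondary but necessary step is to verify the Leray-Hopf inequality \eqref{leray} for both solutions; here one checks that along these self-similar solutions the nonlinear flux through the modes is dissipative in the mean, so that \eqref{leray} holds, in general with strict inequality reflecting anomalous energy loss to the high modes.
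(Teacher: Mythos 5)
Your self-similar framework is internally consistent on the level of scalings and norms (the exponent $\gamma=\beta-2$, the collapse to a single profile equation, and the observation that admissibility of $f$ holds precisely for $\beta>2$ all check out, modulo the $n=1$ equation, where $u_0\equiv 0$ breaks the ansatz and forces a correction to $f_1$ that must be arranged to be the same for both solutions). The genuine gap is in the central step: the mechanism you invoke for producing a second profile. You claim that the advanced argument $\lambda^2\xi$ means that ``prescribing $\Psi$ on an initial $\xi$-interval does not determine it further on.'' That is not so: the profile equation \emph{at} a point $\xi$ can be solved for the advanced value, $c_2\,\Psi(\xi)\,\Psi(\lambda^2\xi)=F(\xi)-\Psi'(\xi)-\Psi(\xi)+c_1\Psi(\lambda^{-2}\xi)^2$, so wherever $\Psi(\xi)\neq 0$ the value $\Psi(\lambda^2\xi)$ is \emph{uniquely determined} by the data on $[0,\xi]$. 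Hence two profiles agreeing on $[0,\xi_*]$ with the common part non-vanishing on $[\lambda^{-2}\xi_*,\xi_*]$ must agree on $[\xi_*,\lambda^2\xi_*]$ and, by induction, everywhere. Branching is only conceivable through the zero set of $\Psi$, and even then the equation propagates the perturbation forward block by block $[\lambda^{2k}\xi_*,\lambda^{2k+2}\xi_*]$ through a transfer map; the entire difficulty is to show that the resulting second profile does not grow too fast as $\xi\to\infty$ (growth of the profile translates directly into failure of $\sup_t\sum_n u_n(t)^2<\infty$ and of $\sum_n\lambda^{2n}\int_0^T u_n^2\,dt<\infty$). Your proposal defers exactly this to an unspecified ``contraction/iteration argument,'' which is where the whole proof lives.

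For comparison, the paper attacks precisely this point by the Golovkin decomposition $u^\pm=v\pm g$ of \eqref{new_system}: the difference $g$ satisfies a \emph{linear} system with coefficients $v$, and $v$ is chosen piecewise in time so that each mode $g_n$ is excited only on three consecutive intervals of length $\sim\lambda^{-2n}$. The per-block transfer map is then a genuine $T$-exponent of a $3\times 3$ system (Theorem \ref{t31}), and Sections 4--5 are devoted to engineering the coefficients so that this map has an eigenvalue $\rho$ with $|\rho|>\lambda^\beta$; the factor $\rho^{-n}$ in \eqref{gn} is what makes the difference decay fast enough in $n$ for the Leray--Hopf bounds and for $\sum_n\lambda^{-2n}\int f_n^2<\infty$. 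Note that in the paper's construction the decay rate of $g_n$ is $\rho^{-n}$ with $\rho$ a spectral quantity, \emph{not} a power of $\lambda$, so the two solutions are not of your strictly self-similar form; your ansatz rigidly ties the decay of the difference to $\lambda^{-(\beta-2)n}$, and nothing in your argument shows that the profile equation tolerates a perturbation with exactly that rate. Until you supply a quantitative analysis of the block-to-block transfer for the linearized profile equation (the analogue of Lemmas \ref{l44}--\ref{l45}), the construction of $\Psi_2$ is unproven and the argument does not establish Theorem \ref{t04}.
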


\begin{rem}
As will be seen from the proof, the energy conservation holds for constructed solutions. 
So, for all $t\in [0, T]$ we have the equality in \eqref{leray}, see \eqref{38} below.
\end{rem}

\begin{rem}
The problem with non-zero but rapidly decreasing with $n$ right-hand sides is still open.
\end{rem}

System \eqref{system} and similar ones were considered also in works 
\cite{BMR, Ch, KP02, KZ, T, W}.

Let us note some results concerning strong solutions, 
although it is not directly related to our paper. 
The word "strong"\ here means fast decreasing  $u_n(t)$ with $n \to \infty$. 
Different authors give various definitions of strong solutions. 
In all cases the existence of a strong solution guarantees the uniqueness of the Leray-Hopf solution.

\begin{theorem}[\cite{Ch}]
\label{t05}
1) Suppose $\be \le 2$, $\sum_{n=1}^\infty \la^{2n} a_n^2 < \infty$,
$f_n (t) \ge 0$, $\sum_{n=1}^\infty \int_0^T f_n (t)^2 dt < \infty$.
Then there exists solution of \eqref{system}, such that estimate
$$
\sup_{t\in [0,T]} \sum_{n=1}^\infty \la^{2n} u_n (t)^2 < \infty 
$$
holds.

2) Let $\be > 3$, $\er > 0$, $f_n \equiv 0$.
Then there exists such a number $M(\er)$, that if
$a_n \ge 0$ and $\sum_{n=1}^\infty \la^{2\be\er n} a_n^2 > M$, then
for any solution $\{u_n\}$ of the system \eqref{system} 
$$
\int_0^T \left(\sum_{n=1}^\infty \la^{2(\er+1/3)\be n} 
u_n (t)^2\right)^{3/2} dt = + \infty
$$
holds for some finite $T$.
For example, one could take all $a_n = 0$ for $n \ge 2$ if $a_1$ is big enough.
\end{theorem}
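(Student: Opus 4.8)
The plan is to derive a uniform \emph{a priori} bound for the enstrophy‑type quantity $X(t)=\sum_n\la^{2n}u_n(t)^2$ on a Galerkin truncation (modes $n\le N$) and then pass to the limit $N\to\infty$. From Theorem \ref{t01} I may assume the basic energy bounds $\sup_t\|u\|^2<\infty$ and $\int_0^T X(t)\,dt<\infty$, uniformly in $N$, where $\|u\|^2=\sum_n u_n^2$. Testing the $n$‑th equation with $\la^{2n}u_n$ and summing, the two cubic terms no longer cancel (the weight spoils the orthogonality); reindexing one of them gives, with $D(t)=\sum_n\la^{4n}u_n^2=\|Au\|^2$,
$$\tfrac12 X'+D=(\la^{-2}-1)\sum_n\la^{(\be+2)n}u_{n-1}^2u_n+\sum_n\la^{2n}f_nu_n .$$
The forcing is absorbed by Cauchy--Schwarz and Young, $\bigl|\sum_n\la^{2n}f_nu_n\bigr|\le\tfrac14 D+\sum_n f_n^2$, which is integrable in $t$ by hypothesis.

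\medskip
\noindent For the cubic term I would use Cauchy--Schwarz in $n$, placing the weight $\la^{2n}$ on $u_n$,
$$\sum_n\la^{(\be+2)n}u_{n-1}^2|u_n|\le\|Au\|\,\Bigl(\la^{2\be}\sum_m\la^{2\be m}u_m^4\Bigr)^{1/2},$$
and then the interpolation $\sup_m\la^{(2\be-2)m}u_m^2\le\|u\|^{3-\be}\|Au\|^{\be-1}$ (valid for $1\le\be\le2$; the range $\be<1$ is simpler) together with $\sum_m\la^{2\be m}u_m^4\le\bigl(\sup_m\la^{(2\be-2)m}u_m^2\bigr)X$. Young's inequality then yields the key bound $(\la^{-2}-1)\sum_n\la^{(\be+2)n}u_{n-1}^2u_n\le\tfrac14 D+C\|u\|^2X^{2/(3-\be)}$. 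The hypothesis $\be\le2$ enters exactly here: it forces the exponent $2/(3-\be)\le2$. In the borderline case $\be=2$ one obtains $X'\le C\|u\|^2X^2+2\sum_n f_n^2$, which I close by the two‑dimensional Ladyzhenskaya device: writing $X'\le\ph(t)X+2\sum_n f_n^2$ with $\ph=C\|u\|^2X$, one has $\int_0^T\ph\,dt\le C\sup_t\|u\|^2\int_0^T X\,dt<\infty$, so Gronwall gives a bound on $\sup_tX$ uniform in $N$; for $\be<2$ the exponent is $<2$ and the argument is even easier. Passing to the limit produces the required solution with $\sup_t\sum_n\la^{2n}u_n^2<\infty$.

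\medskip
\noindent\textbf{Part 2.} First I would record that positivity is preserved: since $f_n\equiv0$ and $a_n\ge0$, at any instant where some $u_n$ vanishes one has $\dot u_n=\la^{\be n}u_{n-1}^2\ge0$, so $u_n(t)\ge0$ for all $t$ (proved on the truncated system and passed to the limit). Put $s=\be\er$ and $\Phi=\sum_n\la^{2sn}u_n^2$; testing with $\la^{2sn}u_n$ and reindexing gives the identity $\tfrac12\Phi'+\sum_n\la^{(2s+2)n}u_n^2=(1-\la^{-2s})\sum_n\la^{(2s+\be)n}u_{n-1}^2u_n$, whose right‑hand side is a \emph{nonnegative} source by positivity. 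Substituting $v_n=\la^{(s+\be/3)n}u_n\ge0$, so that $G(t)=\sum_n\la^{2(\er+1/3)\be n}u_n^2=\sum_n v_n^2$, a direct computation of the exponents gives $\sum_n\la^{(2s+\be)n}u_{n-1}^2u_n=\la^{2s+2\be/3}\sum_n\la^{-sn}v_{n-1}^2v_n\le C\bigl(\sup_n v_n\bigr)\sum_n v_{n-1}^2\le CG^{3/2}$. Dropping the dissipation we obtain the clean inequality $\Phi(t)\le\Phi(0)+C\int_0^tG(\tau)^{3/2}\,d\tau$.

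\medskip
\noindent It remains to show that, for large data, $\Phi$ becomes infinite at some finite time; the inequality $\Phi(T)\le\Phi(0)+C\int_0^TG^{3/2}$ then immediately forces $\int_0^TG^{3/2}=+\infty$. For the blow‑up I would introduce the weighted first moment $\Psi=\sum_n\la^{\al n}u_n$ ($0<\al<\be$), finite and positive by positivity, and establish a Riccati‑type lower bound $\dot\Psi\ge c\Psi^2-(\text{viscous term})$: reindexing shows the nonlinear contribution dominates $c_0\sum_n\la^{(\al+\be)n}u_n^2$, which by Cauchy--Schwarz is $\ge c\Psi^2$, and the assumption $\be>3$ is precisely what lets the viscous term $\sum_n\la^{(\al+2)n}u_n$ be absorbed. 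For $\Psi(0)$ large enough --- guaranteed by $\sum_n\la^{2\be\er n}a_n^2>M$, e.g. a single large mode $a_1$ --- the quadratic term wins and $\Psi$, hence $\Phi$, blows up in finite time. \textbf{The main obstacle} is exactly this last step: making the Riccati inequality rigorous, dominating the viscous term, and pinning down the threshold $\be>3$ (together with the admissible range of $\al$) is the technical heart of \cite{Ch}.
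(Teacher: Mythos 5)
First, a point of reference: the paper does not prove this statement at all. Theorem \ref{t05} is quoted from \cite{Ch} as background on strong solutions and blow-up; there is no proof in the text to compare yours against, so the relevant benchmark is Cheskidov's paper itself.

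On its own merits, your Part 1 is a sound and essentially complete sketch: testing with $\la^{2n}u_n$, reindexing the flux terms, the interpolation $\la^{(2\be-2)m}u_m^2\le\|u\|^{3-\be}\|Au\|^{\be-1}$, and the two-dimensional Gronwall trick $\int_0^T\|u\|^2X\,dt<\infty$ close the argument at the critical exponent $\be=2$ and below. (Two small remarks: the coefficient of the reindexed cubic term is $1-\la^{-2}>0$, not $\la^{-2}-1$; and you never use the hypothesis $f_n\ge 0$, which is harmless since your argument proves a slightly stronger statement, though it signals that your route differs from Cheskidov's positivity-based one.)

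Part 2 contains a genuine gap, and you have located it yourself. The reduction is fine: positivity preservation, the identity for $\Phi=\sum_n\la^{2\be\er n}u_n^2$ with nonnegative flux, the bound of the flux by $CG^{3/2}$, and the logical step ``$\Phi$ becomes infinite in finite time $\Rightarrow\int_0^TG^{3/2}=+\infty$'' all check out. But the blow-up itself is only gestured at, and the specific mechanism you propose does not obviously close in the stated range. With $\Psi=\sum_n\la^{\al n}u_n$, the nonlinear part does dominate $c\sum_n\la^{(\al+\be)n}u_n^2\ge c'\Psi^2$ (for $0<\al<\be$, by Cauchy--Schwarz against the convergent weight $\sum\la^{(\al-\be)n}$), but absorbing the viscous term $\sum_n\la^{(\al+2)n}u_n$ into $\epsilon\sum_n\la^{(\al+\be)n}u_n^2+C\sum_n\la^{(\al+4-\be)n}$ requires $\al<\be-4$, i.e.\ $\be>4$ rather than $\be>3$; the same obstruction appears if you use Cauchy--Schwarz instead of Young. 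Reaching the threshold $\be>3$ (which is tied to the exponent $3/2$ on $G$, a Prodi--Serrin-type quantity) requires a genuinely different functional or a more delicate energy-cascade argument, and this is precisely the content of \cite{Ch}. As written, Part 2 is a correct reduction plus an honest admission that the core estimate is missing, so it cannot be accepted as a proof of the statement.
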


The second part of this theorem in addition to the theorem \ref{t02} means that for $\be > 3$ the Leray-Hopf solutions can be not strong.

\begin{theorem}[\cite{BMR}]
\label{t06}
Let $\la = 2$, $2 < \be \le \frac52$.
Suppose $\{a_n\} \in l_2$, $a_n \ge 0$, $f_n \equiv 0$ for all $n$.
Then there exists a solution to \eqref{system}, such that
$$
u_n (t) = O (\la^{-\ga n}), \ \ n \to \infty, 
\quad \forall \ga, \ \ \forall t > 0 .
$$
\end{theorem}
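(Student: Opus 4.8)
The plan is to realize the solution as a limit of Galerkin truncations kept in the positive cone, and to control it by two complementary devices: a forward-cascade identity for the weighted energies that isolates the obstruction, and a triangular Duhamel comparison that converts the growing dissipation rates into fast decay in $n$. First I would record the preliminaries. Since $a_n\ge0$ and, on $\{u_n=0\}$, the right-hand side of \eqref{system} with $f\equiv0$ reduces to $\la^{\be n}u_{n-1}^2\ge0$, the nonnegative cone is invariant for every truncated system, so the approximations and the limit satisfy $u_n(t)\ge0$. Testing \eqref{system} with $u_n$ and summing, the orthogonality of $B$ gives $\tfrac12\frac{d}{dt}\sum_n u_n^2=-\sum_n\la^{2n}u_n^2$, whence $\sum_n u_n(t)^2\le\sum_n a_n^2=:E_0$, the uniform bound $u_n(t)\le\sqrt{E_0}$, and $\int_0^T\sum_n\la^{2n}u_n^2\,dt\le E_0/2$; these are the bounds already behind Theorem \ref{t01}.

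The decay mechanism is triangular. Because $u_n,u_{n+1}\ge0$, the loss term in \eqref{system} is nonpositive, so $\dot u_n+\la^{2n}u_n\le\la^{\be n}u_{n-1}^2$, and $u_n$ is dominated by the solution $U_n$ of the decoupled cascade $\dot U_n+\la^{2n}U_n=\la^{\be n}U_{n-1}^2$, $U_0\equiv0$, which integrates mode by mode. The quadratic nonlinearity doubles decay rates while the dissipation rate $\la^{2n}$ grows geometrically, so $U_n(t)\le K_n e^{-d_n t}$ with $d_n=\la^2 2^{\,n-1}$; here $\la=2$ enters through $\la^2>2$, which keeps the relaxation times $\la^{-2n}$ summable after the $2^n$ amplification and makes the doubling of rates match the squaring. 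For $t$ past a threshold fixed by the (doubly exponentially large) constants $K_n$, this already gives $u_n(t)=O(\la^{-\ga n})$ for every $\ga$.

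The main obstacle is to bring this down to every $t>0$, where the crude comparison diverges because the coupling $\la^{(\be-2)n}$ is supercritical for $\be>2$. The obstruction is visible intrinsically in the weighted energies $H_s(t)=\sum_n\la^{2sn}u_n(t)^2$: using positivity and reindexing one obtains, for every $s\ge0$,
\[ \tfrac12\frac{d}{dt}H_s+H_{s+1}=(\la^{2s}-1)\la^\be\sum_{n\ge1}\la^{(2s+\be)n}u_n^2\,u_{n+1}\ge0, \]
which for $s=0$ is the energy equality and for $s>0$ exhibits a purely forward flux. The whole question is whether the dissipation $H_{s+1}$ dominates this source: bounding $u_{n+1}\le\sqrt{E_0}$ estimates it by $C\sqrt{E_0}\,H_{s+\be/2}$, a strictly higher regularity level than $H_{s+1}$ when $\be>2$, so the estimate cannot be closed mode by mode. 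Controlling this forward flux is exactly where $\la=2$ and $2<\be\le\tfrac52$ are used; following \cite{BMR} I would not estimate it term by term but propagate a pointwise barrier along the cascade that retains the damping of the loss term — near the natural profile $u_n\sim\la^{-(\be-2)n}$ this damping rate is comparable to $\la^{2n}$ — the window $\be\le\tfrac52$ being what keeps the barrier summable and beats the flux.

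Granting the resulting uniform (in the truncation) a priori bounds $\sup_{t\ge t_0}\la^{\ga n}u_n(t)\to0$ for every $\ga$ and every $t_0>0$, I would extract a subsequence converging, via the dissipation bound and a diagonal Arzel\`a--Ascoli argument over finite blocks of modes, to a nonnegative solution of \eqref{system} that satisfies \eqref{leray} by lower semicontinuity and inherits $u_n(t)=O(\la^{-\ga n})$ for all $\ga$ and all $t>0$ by Fatou. The analytic core is the flux control of the previous paragraph; the remainder is the compactness scheme already underlying Theorem \ref{t01}.
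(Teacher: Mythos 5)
First, a caveat: Theorem \ref{t06} is quoted in this paper from \cite{BMR}; the paper contains no proof of it, so there is nothing internal to compare your proposal against, and I can only judge it on its own terms.

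On those terms there is a genuine gap, and it sits exactly at the theorem's core. Your preliminaries are fine: invariance of the nonnegative cone under the (truncated) flow, the energy bounds, the supersolution comparison $\dot u_n+\la^{2n}u_n\le\la^{\be n}u_{n-1}^2$, and the weighted-energy identity with its forward flux $(\la^{2s}-1)\la^{\be}\sum_n\la^{(2s+\be)n}u_n^2u_{n+1}$ are all correct, and your diagnosis that the crude bound $u_{n+1}\le\sqrt{E_0}$ lands you at $H_{s+\be/2}$ rather than $H_{s+1}$, so that the estimate cannot be closed for $\be>2$, is exactly right. But having isolated the obstruction you dispose of it in one clause: ``propagate a pointwise barrier along the cascade \dots\ the window $\be\le\frac52$ being what keeps the barrier summable and beats the flux,'' followed by ``granting the resulting uniform a priori bounds.'' That barrier \emph{is} the theorem. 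You never say what the barrier is, what recursion it satisfies, how it absorbs the doubly exponential constants $K_n$ of your triangular comparison (which, as you yourself note, only yield decay for $t$ past a threshold that does not shrink to $0$), or where $\la=2$ and the exponent $\frac52$ actually enter --- your own heuristic profile $u_n\sim\la^{-(\be-2)n}$ is square-summable for every $\be>2$, so it cannot by itself explain the restriction $\be\le\frac52$. Everything before this point is the standard machinery already used for Theorem \ref{t01}, and everything after it (diagonal extraction, Fatou, lower semicontinuity) is routine; the one step you ``grant'' is precisely the new analytic content of \cite{BMR}, so the proposal as written does not constitute a proof.
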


The question of existence of the strong solution with arbitrary (not necessarily non-negative) "good"\ initial data and right-hand sides remains open. 
So does the question whether the Leray-Hopf solution is always strong for $2<\be\le 3$.

All three works \cite{BMR, Ch, F} are using the following property of positivity conservation in the absence of the right-hand sides 
(or with non-negative right-hand sides):

{\it if} 
$$
\dot u_n + \lambda^{2n}u_n  + \lambda^{\beta(n+1)}u_n u_{n+1} = \lambda^{\beta n}u_{n-1}^2,
$$
{\it and} $u_n(t_0) \ge 0$, {\it then} $u_n(t) \ge 0$ {\it for all} $t>t_0$.

This property follows from the explicit formula
\begin{eqnarray*}
u_n(t) = u_n(t_0) 
\exp\left(-\int_{t_0}^t (\la^{2n} + \la^{\be(n+1)} u_{n+1}(s)) ds\right) \\
+ \int_{t_0}^t \exp\left(-\int_s^t (\la^{2n} + \la^{\be(n+1)} u_{n+1}(\si)) d\si\right)
\la^{\be n} u_{n-1}(s)^2 ds .
\end{eqnarray*}
However, the conservation of positivity is a random property in a sense that firstly, Navier-Stokes equations do not have any analogous property, 
and secondly, that it is destroyed when a right-hand side is considered in the system \eqref{system}. 
This gave the authors the idea to build an example of non-uniqueness of Leray-Hopf solutions by choosing an appropriate right-hand side.

\subsection*{Idea of the proof}

The proof of the theorem \ref{t04} is based on an idea, originating from K.~Golovkin.
Now we get back to the abstract setting \eqref{03}.
Suppose that system \eqref{03} has two different solutions. We denote them as $u^\pm$ and rewrite them in a form
$$
u^\pm (t) = v(t) \pm g(t) ,
$$
where $v$ and $g$ are half-sum and half-difference of $u^{\pm}$ respectively. Then the system \eqref{03} is equivalent to:
\begin{equation}
\label{04}
\begin{cases}
\dot v + A v + B (v, v) + B (g, g) = f, \\
v(0) = a, \\
\dot g + A g + B (v, g) + B (g, v) = 0, \\
g(0) = 0 .
\end{cases}
\end{equation}

Note that the system on $g$ becomes {\it linear}.
Now we need to calibrate the coefficient $v$ in a way that the system on $g$ has a non-trivial solution. 
After that using the recently found $v$ and $g$ we calculate $f$, the right-hand side of the first equation in \eqref{04}, 
and make sure that it satisfies the requirements.

In our case the system \eqref{04} takes form
\begin{equation}
\label{new_system}
\left\{\begin{array}{l}
\dot v_n + \lambda^{2n}v_n - \lambda^{\beta n}v_{n-1}^2 - \lambda^{\beta n}g_{n-1}^2 + 
\lambda^{\beta(n+1)}v_n v_{n+1} + \lambda^{\beta(n+1)}g_n g_{n+1} = f_n,\\
v_n(0) = a_n,\\
\dot g_n + \lambda^{2n}g_n - 2\lambda^{\beta n} v_{n-1}g_{n-1} + \lambda^{\beta(n+1)}v_ng_{n+1} + \lambda^{\beta(n+1)}v_{n+1}g_n= 0,\\
g_n(0) = 0.
\end{array}\right.
\end{equation}

One can see that with $v_n \equiv 0$ the third equation of the system \eqref{new_system} becomes trivial. 
So we will split up $[0;T]$ into a set of intervals and put $v_n = 0$ on most of them. 
After that we need only to solve the problem on $g_n$ on a few intervals. 

Using scaling we are able to transform all the equations on $g_n$ to the unified form --- system of three equations on $[0;1]$ (see \eqref{31} below). 
To make $g_n$ continous one needs to add "gluing conditions"\ to the system. 
Existence of the solution of the system with such conditions is the subject of theorem \ref{t31}.

{\bf Plan of the paper.}
In \S \ref{s1} we prove that Leray-Hopf solution always exists.
In \S \ref{s2} we prove that Leray-Hopf solution is unique when $\be \le 2$.
In \S \ref{s3} we formulate theorem \ref{t31} and derive the main result (theorem \ref{t04}) from it.
In \S\S \ref{s4},\ref{s5} we prove the theorem \ref{t31}.

\section{Existence of a Leray-Hopf solution}
\label{s1}

For the sake of completeness we provide the proof of the existence of  Leray-Hopf solutions.

We introduce Galerkin solutions for the problem \eqref{system}. For any  $N \in \N$  consider the problem on the segment $[0;T]$
\begin{equation}
\label{11}
\begin{cases}
\dot v_n^{(N)} + \la^{2n} v_n^{(N)} - 
\la^{\be n} \left(v_{n-1}^{(N)}\right)^2 + 
\la^{\be(n+1)} v_n^{(N)} v_{n+1}^{(N)} = f_n, \quad n = 1, \dots, N,\\
v_n^{(N)} (0) = a_n, \qquad n = 1, \dots, N ; \qquad 
v_0^{(N)} \equiv v_{N+1}^{(N)} \equiv 0 .
\end{cases}
\end{equation}
It is equivalent to the system of integral equations
\begin{eqnarray}
\label{115}
v_n^{(N)} (t) = a_n + \int_0^t \biggl( f_n (\tau) - \la^{2n} v_n^{(N)} (\tau) + \la^{\be n} v_{n-1}^{(N)} (\tau)^2  \\
- \la^{\be(n+1)} v_n^{(N)} (\tau) v_{n+1}^{(N)} (\tau)\biggr) d\tau, \qquad n = 1, \dots, N ,
\nonumber
\end{eqnarray}
or one equation in $\R^N$ 
$$
v^{(N)} (t) = a_{(N)} + \int_0^t F_N (v^{(N)} (\tau), \tau) d\tau ,
$$
where $a_{(N)} = \begin{pmatrix} a_1\\ \dots\\ a_N\end{pmatrix}$, 
\begin{equation}
\label{12}
|F_N(y,\tau)| \le C_N \left(|y|+|y|^2\right) + |f(\tau)_{(N)}|
\end{equation}
and
\begin{equation}
\label{13}
|F_N(y,\tau) - F_N(z,\tau)| \le C_N (1+|y|+|z|) |y-z| .
\end{equation}
Denote
$$
R_N = 2 |a_{(N)}| + 2 \int_0^T |f(t)_{(N)}|\, dt, 
$$
\begin{equation}
\label{14}
\de_N = \frac1{2C_N (R_N + 1)} = 
\frac1{C_N \left(4 |a_{(N)}| + 4 \int_0^T |f(t)_{(N)}|\, dt +2\right)} .
\end{equation}
In the space of continuous functions $C([0,\de_N], \R^N)$ consider the closed ball
$$
M_N := \left\{ v \in C([0,\de_N], \R^N) : \|v\|_C \le R_N \right\}
$$
and the map
$$
(K_N v) (t) =  a_{(N)} + \int_0^t F_N (v (\tau), \tau) d\tau .
$$
It maps $M_N$ to itself due to \eqref{12} and \eqref{14}, and it is a contraction due to \eqref{13} and \eqref{14}. 
Thus, systems \eqref{115} and \eqref{11} have a solution on $[0,\de_N]$, where $\de_N$ is defined by \eqref{14}.
It is clear that $v_n^{(N)} \in W_2^1 (0, \de_N)$.
Multiplying $\eqref{11}$ with $2v_n^{(N)}$, summing for all $n$ and integrating we get
\begin{equation}
\label{15}
\sum_{n=1}^N v_n^{(N)} (t)^2 + 2 \sum_{n=1}^N \la^{2n} \int_0^t v_n^{(N)} (\tau)^2 d\tau 
= \sum_{n=1}^N a_n^2 + 2 \sum_{n=1}^N \int_0^t f_n (\tau) v_n^{(N)} (\tau) d\tau .
\end{equation}
Using Cauchi inequality for the last addend in the right-hand side, we arrive at the estimate
\begin{equation}
\label{16}
\sum_{n=1}^N v_n^{(N)} (t)^2 + \sum_{n=1}^N \la^{2n} \int_0^t v_n^{(N)} (\tau)^2 d\tau 
\le \sum_{n=1}^N a_n^2 + \sum_{n=1}^N \la^{-2n} \int_0^t f_n (\tau)^2 d\tau .
\end{equation}
So the following lemma is now proven.
\begin{lemma}
\label{l11}
System \eqref{11} has a solution on the segment $[0,t_1]$, 
$$
t_1 = \frac1{C_N \left(4 |v^{(N)}(0)| + 4 \int_0^T |f(t)_{(N)}|\, dt +2\right)} ,
$$
and
$$
|v^{(N)} (t_1)| \le |v^{(N)}(0)| + \left(\sum_{n=1}^N \la^{-2n} \int_0^T f_n(t)^2 dt\right)^{1/2} .
$$
\end{lemma}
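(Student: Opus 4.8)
The plan is to combine a local existence argument via the contraction mapping principle with the a priori energy estimate \eqref{16}. First I would recast \eqref{11} as the fixed point problem for the integral operator $K_N$ on the space $C([0,\de_N],\R^N)$, as set up above. Using the quadratic growth bound \eqref{12} together with the choice of $\de_N$ in \eqref{14}, I would verify that $K_N$ maps the closed ball $M_N$ of radius $R_N$ into itself; using the Lipschitz bound \eqref{13} and the same choice of $\de_N$, I would verify that $K_N$ is a contraction on $M_N$. The Banach fixed point theorem then yields a (unique) solution $v^{(N)}$ on $[0,\de_N]$, and from the integral representation \eqref{115} it is immediate that each $v_n^{(N)} \in W_2^1(0,\de_N)$.

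The decisive step is the energy identity \eqref{15}. Multiplying the $n$-th equation of \eqref{11} by $2v_n^{(N)}$, summing over $n = 1, \dots, N$ and integrating in time, I would invoke the orthogonality property of the nonlinearity: the quadratic contributions
$$\sum_{n=1}^N \l( - 2\la^{\be n} \l(v_{n-1}^{(N)}\r)^2 v_n^{(N)} + 2\la^{\be(n+1)} \l(v_n^{(N)}\r)^2 v_{n+1}^{(N)}\r)$$
telescope, since the term carrying the coefficient $\la^{\be(n+1)}$ at index $n$ cancels against the term carrying $\la^{\be(n+1)}$ at index $n+1$; the boundary contributions vanish because $v_0^{(N)} \equiv v_{N+1}^{(N)} \equiv 0$. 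This cancellation is the heart of the matter and is exactly the discrete analogue of the orthogonality $(B(u,v),v)=0$ discussed in the introduction; it is what removes the nonlinear terms from the energy balance and leaves precisely \eqref{15}.

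Finally I would estimate the forcing term on the right of \eqref{15} by Young's inequality, $2 f_n v_n^{(N)} \le \la^{-2n} f_n^2 + \la^{2n} \l(v_n^{(N)}\r)^2$, which absorbs half of the dissipation and produces \eqref{16}. Discarding the nonnegative dissipation integral and writing $\sum a_n^2 = |v^{(N)}(0)|^2$ gives
$$|v^{(N)}(t)|^2 = \sum_{n=1}^N v_n^{(N)}(t)^2 \le |v^{(N)}(0)|^2 + \sum_{n=1}^N \la^{-2n}\int_0^T f_n(\tau)^2\, d\tau,$$
and taking square roots at $t = t_1$, together with $\sqrt{x+y}\le\sqrt{x}+\sqrt{y}$, yields the stated endpoint bound. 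I do not expect any genuine obstacle here; the only point to flag rather than belabor is the bookkeeping. The statement is phrased with the generic starting value $v^{(N)}(0)$ in place of $a_{(N)}$ precisely so that the lemma can be reapplied from the endpoint $t_1$. Since the endpoint estimate controls $|v^{(N)}(t_1)|$ by data that does not grow along the iteration, repeatedly invoking the lemma continues the Galerkin solution step by step to the whole interval $[0,T]$.
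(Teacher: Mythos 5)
Your proposal is correct and follows essentially the same route as the paper: a contraction mapping argument on $C([0,\de_N],\R^N)$ using \eqref{12}--\eqref{14}, followed by the telescoping energy identity \eqref{15} and the Cauchy--Young estimate giving \eqref{16}, from which the endpoint bound follows by $\sqrt{x+y}\le\sqrt{x}+\sqrt{y}$. Nothing to add.
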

After that we construct the solution on time intervals $[t_1,t_2]$, $[t_2,t_3]$ and so on.
And we have
$$
|v^{(N)} (t_k)| \le |a_{(N)}| + k \left(\sum_{n=1}^N \la^{-2n} \int_0^T f_n(t)^2 dt\right)^{1/2} ,
$$
$$
t_{k+1} - t_k \ge 
\frac1{C_N \left(4 |a_{(N)}| + 4 k \left(\sum_{n=1}^N \la^{-2n} \int_0^T f_n(t)^2 dt\right)^{1/2} 
+ 4 \int_0^T |f(t)_{(N)}|\, dt +2\right)}.
$$
Since the series $\sum_{k=1}^\infty (t_{k+1} - t_k)$ diverges, we get that system \eqref{11} has a solution on the entire interval $[0,T]$.
It also satisfies \eqref{16} for all $t\in [0,T]$, and hence
\begin{equation}
\label{17}
\sum_{n=1}^N v_n^{(N)} (t)^2 + \sum_{n=1}^N \la^{2n} \int_0^t v_n^{(N)} (\tau)^2 d\tau 
\le \sum_{n=1}^\infty a_n^2 + \sum_{n=1}^\infty \la^{-2n} \int_0^t f_n (\tau)^2 d\tau , \qquad \forall t \in [0,T].
\end{equation}
This inequality and the equation \eqref{115} imply that the sequence
$\{v_n^{(N)}\}_{N=n}^\infty$ is bounded in $W_2^1 (0,T)$ for any $n$.
Therefore there exists a sequence $\{v_n^{(N_k)}\}$, 
converging in $C[0,T]$ while $N_k \to \infty$.
Using a diagonal process we get the sequence of numbers $M_k$, such that
$$
v_n^{(M_k)} \underset{k\to\infty}\longrightarrow u_n \quad \text{in} \quad C[0,T] \qquad \forall n \in \N .
$$
We now show that the constructed sequence $\{u_n\}$ is a Leray-Hopf solution.
Indeed, substituting $N=M_k$ in \eqref{115} and going to the limit $k \to \infty$, 
one can get that the sequence $\{u_n (t)\}_{n=1}^\infty$ satisfies the system \eqref{system}.
Besides, $u_n \in W_2^1 (0,T)$ for all $n$.
Next, \eqref{17} yields
$$
\sum_{n=1}^N \left( v_n^{(M_k)} (t)^2 + \la^{2n} \int_0^t v_n^{(M_k)} (\tau)^2 d\tau \right)
\le \sum_{n=1}^\infty \left( a_n^2 + \la^{-2n} \int_0^T f_n (\tau)^2 d\tau \right) \qquad \text{with} \quad M_k \ge N.
$$
Taking a limit $k \to \infty$, we get 
$$
\sum_{n=1}^N \left( u_n (t)^2 + \la^{2n} \int_0^t u_n (\tau)^2 d\tau \right)
\le \sum_{n=1}^\infty \left( a_n^2 + \la^{-2n} \int_0^T f_n (\tau)^2 d\tau \right) .
$$
Due to the arbitrariness of $N$, this estimate guarantees that $\sum_{n=1}^\infty u_n (t)^2$ is bounded 
and that the series $\sum_{n=1}^\infty \la^{2n} \int_0^T u_n (\tau)^2 d\tau$ converges.

Now the only thing left to prove is the energy estimate.
We introduce the notation
$$
v^{(N)} = \{v_n^{(N)}\}_{n=1}^\infty, \quad \text{where} \quad v_n^{(N)} = 0 \ \ \text{with} \ \ n > N.
$$
It follows from \eqref{17} that the sequence $\{v^{(N)}\}_{N=1}^\infty$ is bounded in the Hilbert space of sequences of functions with a norm
$\left(\sum_{n=1}^\infty \la^{2n} \int_0^T v_n (t)^2 dt\right)^{1/2}$.
Without loss of generality, one can suppose that the sequence $\{v^{(M_k)}\}_{k=1}^\infty$ weakly converges in the mentioned space.
Also, all $v_n^{(M_k)}$ weakly converge $L_2(0,T)$, and therefore the limit coincides with the sequence $u = \{u_n(t)\}_{n=1}^\infty$.
Moreover, this weak convergence implies that
\begin{equation}
\label{18}
\sum_{n=1}^\infty \int_0^t f_n (\tau) v_n^{(M_k)} (\tau) \, d\tau 
 \underset{k\to\infty}\longrightarrow 
\sum_{n=1}^\infty \int_0^t f_n (\tau) u_n (\tau) \, d\tau \quad \forall t \in [0,T] .
\end{equation}
Finally, with $M_k \ge N$ we have
\begin{eqnarray*}
\sum_{n=1}^N \left(v_n^{(M_k)} (t)^2 + 2 \la^{2n} \int_0^t v_n^{(M_k)} (\tau)^2 d\tau\right) 
\le \sum_{n=1}^\infty \left(v_n^{(M_k)} (t)^2 + 2 \la^{2n} \int_0^t v_n^{(M_k)} (\tau)^2 d\tau\right) \\
= \sum_{n=1}^{M_k} a_n^2 + 2 \sum_{n=1}^\infty \int_0^t f_n (\tau) v_n^{(M_k)} (\tau)\, d\tau,
\end{eqnarray*}
where we used \eqref{15} in the second equality. Taking the limit $k\to\infty$ again, we get
$$
\sum_{n=1}^N \left(u_n (t)^2 + 2 \la^{2n} \int_0^t u_n (\tau)^2 d\tau\right) 
\le \sum_{n=1}^\infty \left(a_n^2 + 2 \int_0^t  f_n (\tau) u_n (\tau)\, d\tau\right) ,
$$
here we used the convergence \eqref{18}.
Because of the arbitrariness of $N$, the estimate \eqref{leray} follows.
Theorem \ref{t01} is proven.

\section{Uniqueness of the solution in the case $\be\le 2$}
\label{s2}

\begin{lemma}
\label{l21}
Let $u$ be a Leray-Hopf solution to the problem \eqref{system}, $\be \le 2$.
Then for any $\er > 0$ there exists $N$ such that
$$
\sum_{n=N}^\infty u_n(t)^2 \le \er \qquad \forall\ t \in [0,T].
$$
\end{lemma}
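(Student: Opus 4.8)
The plan is to write down an energy identity for the tail $\sum_{n\ge N} u_n(t)^2$ and then show that each term on its right-hand side is small for large $N$, \emph{uniformly} in $t$. Throughout I will use the two quantities guaranteed finite by the Leray-Hopf property: the energy bound $S := \sup_{t\in[0,T]} \sum_{n=1}^\infty u_n(t)^2 < \infty$ and the dissipation terms $D_n := \la^{2n}\int_0^T u_n(t)^2\,dt$, which satisfy $\sum_{n=1}^\infty D_n < \infty$.

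First I would fix $M>N$ and work with the finite sum $\sum_{n=N}^{M} u_n(t)^2$. Since each $u_n\in W_2^1(0,T)\subset C[0,T]$, all the cubic products are integrable and $\tfrac{d}{dt}u_n^2 = 2u_n\dot u_n$ holds in $L_1$, so I may multiply \eqref{system} by $2u_n$ and sum over $N\le n\le M$. Setting $w_n := \la^{\be(n+1)}u_n^2 u_{n+1}$, the two cubic contributions are $2\sum_{n=N}^{M}(w_{n-1}-w_n) = 2(w_{N-1}-w_M)$, i.e.\ they telescope (this is the orthogonality of $B$ at the level of a partial sum), leaving only the two boundary terms. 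Integrating over $[0,t]$ gives
$$
\sum_{n=N}^{M} u_n(t)^2 + 2\sum_{n=N}^{M}\la^{2n}\int_0^t u_n^2\,ds
= \sum_{n=N}^{M} a_n^2 + 2\int_0^t (w_{N-1}-w_M)\,ds + 2\sum_{n=N}^{M}\int_0^t f_n u_n\,ds .
$$
Next I would let $M\to\infty$. For the boundary term at $M$, bounding $|u_{M+1}|\le\sqrt S$ and writing $\int_0^T u_M^2 = \la^{-2M}D_M$ yields $\left|\int_0^t w_M\,ds\right| \le \sqrt S\,\la^{\be}\la^{(\be-2)M}D_M$; since $\be\le 2$ and $\la>1$ we have $\la^{(\be-2)M}\le 1$, so this vanishes as $M\to\infty$. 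All remaining sums converge, and I obtain, after dropping the non-positive dissipation term, the tail estimate
$$
\sum_{n=N}^{\infty} u_n(t)^2 \le \sum_{n=N}^{\infty} a_n^2 + 2\la^{\be N}\int_0^t u_{N-1}^2\,u_N\,ds + 2\sum_{n=N}^{\infty}\int_0^t f_n u_n\,ds .
$$

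Finally I would estimate the three terms, all uniformly in $t\in[0,T]$. The initial-data term $\sum_{n\ge N} a_n^2\to 0$ because $\{a_n\}\in l_2$. For the forcing term, Cauchy--Schwarz in $t$ and then in $n$ gives $2\sum_{n\ge N}\int_0^T |f_n u_n| \le 2\bigl(\sum_{n\ge N}\la^{-2n}\int_0^T f_n^2\bigr)^{1/2}\bigl(\sum_{n\ge N}D_n\bigr)^{1/2}$, and both tails tend to $0$ by hypothesis on $f$ and by $\sum_n D_n<\infty$. The genuinely delicate term, and the place where I expect the main obstacle, is the single cubic boundary term $2\la^{\be N}\int_0^t u_{N-1}^2 u_N\,ds$: here is precisely where the assumption $\be\le 2$ is essential. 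I would bound one factor by $|u_{N-1}|\le\sqrt S$, apply Cauchy--Schwarz to the integral, and convert the integrals into dissipation, obtaining
$$
2\la^{\be N}\int_0^t u_{N-1}^2 |u_N|\,ds
\le 2\sqrt S\,\la^{\be N}\Bigl(\int_0^T u_{N-1}^2\Bigr)^{1/2}\Bigl(\int_0^T u_N^2\Bigr)^{1/2}
= 2\sqrt S\,\la^{(\be-2)N+1}(D_{N-1}D_N)^{1/2} .
$$
Using $\la^{(\be-2)N}\le 1$ this is at most $\sqrt S\,\la\,(D_{N-1}+D_N)\to 0$ as $N\to\infty$. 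Since all three terms are bounded by quantities independent of $t$ that tend to $0$, choosing $N$ large makes the tail below $\er$ for every $t$, which proves the lemma.
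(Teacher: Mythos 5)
Your proof is correct and follows essentially the same route as the paper: the tail energy identity obtained by multiplying by $2u_n$ and summing over $n\ge N$, with the cubic terms telescoping to the single boundary term $2\la^{\be N}\int_0^t u_{N-1}^2u_N$, and then showing the initial-data, forcing, and boundary terms all tend to $0$ uniformly in $t$, the last one using $\be\le 2$ via $\la^{(\be-2)N}\le 1$ and the summability of the dissipation. The only differences are cosmetic: you justify the infinite sum by passing to the limit over finite partial sums (controlling $w_M$), where the paper checks absolute convergence of the cubic series up front, and you use Cauchy--Schwarz on the boundary term where the paper simply bounds $|u_N|$ by its sup.
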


\begin{proof}
As $\be \le 2$, series 
$$
\sum_{n=1}^\infty \la^{\be n} \int_0^T |u_{n-1}(t)^2 u_n(t)| \, dt 
\le \sup_{n\in\N, t\in [0,T]} |u_n(t)| \cdot \sum_{n=1}^\infty \la^{2n} \int_0^T u_{n-1}(t)^2 dt < \infty
$$
converges by the definition of a Leray-Hopf solution.
So, multiplying \eqref{system} by $2u_n$,
integrating with respect to $t$ and summing up with respect to $n$ from $N$ to $\infty$, we get
\begin{eqnarray}
\label{21}
\sum_{n=N}^\infty \left(u_n (t)^2 + 2 \la^{2n} \int_0^t u_n (\tau)^2 d\tau\right) 
- 2 \la^{\be N} \int_0^t u_{N-1} (\tau)^2 u_N (\tau) \, d\tau \\
= \sum_{n=N}^\infty \left(a_n^2 + 2 \int_0^t  f_n (\tau) u_n (\tau)\, d\tau\right) .
\nonumber
\end{eqnarray}
It is clear that
$$
\sum_{n=N}^\infty a_n^2  \underset{N\to\infty}\longrightarrow 0,
$$
$$
\sum_{n=N}^\infty \left|  \int_0^t  f_n (\tau) u_n (\tau)\, d\tau\right| 
\le \left(\sum_{n=N}^\infty \la^{-2n} \int_0^T  f_n (\tau)^2 d\tau\right)^{1/2} 
\left(\sum_{n=N}^\infty \la^{2n} \int_0^T  u_n (\tau)^2 d\tau\right)^{1/2} \underset{N\to\infty}\longrightarrow 0,
$$
and
\begin{eqnarray*}
\la^{\be N} \left|\int_0^t u_{N-1} (\tau)^2 u_N (\tau) \, d\tau\right| 
\le C \la^{2N} \int_0^T  u_{N-1} (\tau)^2 d\tau 
\le C \la^2 \sum_{n=N-1}^\infty \la^{2n} \int_0^T  u_n (\tau)^2 d\tau  \underset{N\to\infty}\longrightarrow 0 .
\end{eqnarray*}
This convergence and \eqref{21} give the result.
\end{proof}

\begin{rem}
In particular, Lemma \ref{l21} implies that if $u$ is a Leray-Hopf solution to \eqref{system} and $\be \le 2$, then $u \in C\left([0,T]; l_2\right)$.
\end{rem}

\vspace{10pt}

{\it Proof of Theorem \ref{t03}.}
Suppose $u^\pm$ are two Leray-Hopf solutions of the system \eqref{system}.
We write them in the form $u^\pm = v \pm g$.
Then
\begin{equation}
\label{22}
\sup_{t\in (0,T)} \sum_{n=1}^\infty \left(v_n(t)^2 + g_n(t)^2\right) < \infty, \quad
\sum_{n=1}^\infty \la^{2n} \int_0^T \left(v_n(t)^2 + g_n(t)^2\right) dt < \infty ,
\end{equation}
Due to the last Lemma,
\begin{equation}
\label{23}
\forall \ \er \quad \exists \  N \  \colon \quad \sup_{t\in (0,T)} \sum_{n=N}^\infty v_n(t)^2 < \er .
\end{equation}
Functions $v_n$, $g_n$ satisfy the system \eqref{new_system}, therefore
$$
\dot g_n g_n + \lambda^{2n} g_n^2 - 2\lambda^{\beta n} v_{n-1} g_{n-1} g_n 
+ \lambda^{\beta(n+1)}v_n g_n g_{n+1} + \lambda^{\beta(n+1)}v_{n+1} g_n^2= 0.
$$
Integrating over $[0, t]$, summing up with respect to $n$ from $1$ to $\infty$ 
(all the series converge due to \eqref{22}) and using that $g_n(0) = 0$, we get
\begin{eqnarray*}
\sum_{n=1}^\infty \left(\frac{g_n (t)^2}2 + \la^{2n} \int_0^t g_n (\tau)^2 d\tau\right) 
= \sum_{n=1}^\infty \left( \la^{\be n} \int_0^t v_{n-1} g_{n-1} g_n d\tau 
- \la^{\be(n+1)} \int_0^t v_{n+1} g_n^2 d\tau\right) \\
\le C_1 \sum_{n=1}^\infty \la^{2n} \left(\|v_{n-1}\|_C \int_0^t (g_{n-1}^2 + g_n^2) d\tau 
+ \|v_{n+1}\|_C \int_0^t g_n^2 d\tau \right) \\
= C_1 \sum_{n=1}^\infty \la^{2n} \left(\|v_{n-1}\|_C + \|v_n\|_C + \|v_{n+1}\|_C\right)  \int_0^t g_n^2 d\tau .
\end{eqnarray*}
By virtue of \eqref{23} one can find such $N$ that
$$
\|v_{n-1}\|_C + \|v_n\|_C + \|v_{n+1}\|_C \le \frac1{C_1} \qquad \forall \ n > N .
$$
Then
$$
\sum_{n=1}^\infty g_n (t)^2  \le C_2 \sum_{n=1}^N \int_0^t g_n (\tau)^2 d\tau .
$$
Function $\displaystyle\Phi (t) = \sum_{n=1}^\infty g_n (t)^2$ is bounded due to \eqref{22}. 
Applying the Gronwall inequality to the function $\displaystyle\Phi$ we arrive at
$g_n(t) \equiv 0$ for all $n$.
Thus, $u^+ = u^-$. \qed

\section{Reduction to the system of three ODE}\label{s3}

We want to construct a non-trivial solution to the system \eqref{new_system} with initial data $a_n=0$.
The following theorem plays a key role in this construction.

\begin{theorem}
\label{t31}
Let $\la>1$, $\be>2$, $R>0$.
Consider on $[0;1]$ the system of ODE:
\begin{equation}
\label{31}
\begin{cases}
\dot h_1(\tau) = \left( -\lambda^{-2} + q(\tau)\right) h_1(\tau) - p(\tau)h_2(\tau),\\
\dot h_2(\tau) = 2p(\tau)h_1(\tau) - h_2(\tau) + \lambda^{\beta}q(\tau)h_3(\tau),\\
\dot h_3(\tau) = -  2\lambda^\beta q(\tau)h_2(\tau) - \la^2 h_3(\tau),\\
h_1(0) = 0, \quad h_2(0) = y, \quad h_3(0) = z.
\end{cases}
\end{equation}
There exist functions $p, q \in C_0^\infty (0, 1)$ and numbers $y,z\in\R$,
$y^2+z^2\neq 0$, such that the only solution $h \in C^\infty ([0,1]; \R^3)$ of the system \eqref{31} with given $p,q,y,z$ has the properties
\begin{equation}
\label{32}
h_1(1) = \rho y, \quad h_2(1) = \rho z, \qquad \text{where} \quad |\rho| > R .
\end{equation}
\end{theorem}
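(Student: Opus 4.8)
The plan is to exploit the linearity of \eqref{31} in $h=(h_1,h_2,h_3)$. For fixed $p,q$ let $\Phi=(\Phi_{ij})_{i,j=1}^3$ be the value at $\tau=1$ of the fundamental matrix of \eqref{31}, so that $h(1)=\Phi\,h(0)$ with $h(0)=(0,y,z)^{\top}$. Writing out the first two components, the required conditions \eqref{32} become
\[
\begin{pmatrix}\Phi_{12}&\Phi_{13}\\ \Phi_{22}&\Phi_{23}\end{pmatrix}\begin{pmatrix}y\\ z\end{pmatrix}=\rho\begin{pmatrix}y\\ z\end{pmatrix},
\]
that is, $(y,z)$ must be an eigenvector of the $2\times2$ matrix $M$ on the left, with eigenvalue $\rho$. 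Since eigenvectors are nonzero, the constraint $y^2+z^2\neq0$ then holds automatically, and the whole theorem reduces to: construct $p,q\in C_0^\infty(0,1)$ for which $M$ has a \emph{real} eigenvalue $\rho$ with $|\rho|>R$.

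To produce such controls I would use two well-separated bumps. Note first that whenever $p\equiv0$ the first equation of \eqref{31} decouples, $\dot h_1=(-\la^{-2}+q)h_1$, so $h_1$ is merely multiplied by $\exp\int(-\la^{-2}+q)$; taking $\int q$ large amplifies $h_1$ by an arbitrarily large factor $E$. The term $p$, on the other hand, couples $h_1$ and $h_2$ (an elliptic rotation in that plane). Accordingly I place $p$ on an interval $I_1$ and $q$ on a later interval $I_2$, with $p=q=0$ elsewhere. Tracing the columns of $\Phi$: starting from $(0,1,0)$, the pulse $p$ on $I_1$ creates a nonzero $h_1$-component (for a suitable bump this only needs $\dot h_1=-p\,h_2$ to move $h_1$ off zero), which the pulse $q$ on $I_2$ then multiplies by $E$, so $\Phi_{12}\asymp E$ can be made as large as we like. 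Starting instead from $(0,0,1)$, on $[0,\,\text{end of }I_1]$ one has $q\equiv0$, hence $(h_1,h_2)$ solves a homogeneous linear system with zero initial data and stays identically zero; on $I_2$ and afterwards $p\equiv0$, so $h_1$ remains zero. Thus no $h_1$ is ever generated from initial $h_3$, i.e. $\Phi_{13}=0$ exactly. The remaining entries $\Phi_{22},\Phi_{23}$ stay bounded, since the large factor $E$ lives only in $h_1$ and, there being no second $p$-pulse, is never fed back into $h_2$.

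Because $\Phi_{13}=0$, the matrix $M$ is lower triangular, so its eigenvalues are exactly $\Phi_{12}$ and $\Phi_{23}$, both real. Choosing $\int_{I_2} q$ large enough makes $|\Phi_{12}|>R$, and I take $\rho=\Phi_{12}$ together with the corresponding eigenvector $(y,z)$ (explicitly $y=1$, $z=\Phi_{22}/(\Phi_{12}-\Phi_{23})$, which is well defined since $\Phi_{12}$ is huge). This yields all the required data. The main obstacle is not the size of the amplification but making sure it appears as a genuine real eigenvalue of $M$ rather than merely as a large matrix entry; this is precisely what the ordering of the two pulses secures, through the identity $\Phi_{13}=0$ and the resulting triangular structure. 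The only analytic point needing care is the honest, drift-perturbed realization of the first rotation with a smooth control on $I_1$ and the verification that the intermediate $h_1$-component is nonzero; this is routine, because the amplification step is exact (the $h_1$-equation is genuinely decoupled on $I_2$), so the construction is robust under the bounded corrections coming from the decay terms on the inactive intervals.
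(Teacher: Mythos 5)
Your reduction to the eigenvalue problem for the $2\times2$ compression $\bigl(\begin{smallmatrix}\Phi_{12}&\Phi_{13}\\ \Phi_{22}&\Phi_{23}\end{smallmatrix}\bigr)$ is exactly the paper's starting point, but from there you take a genuinely different and more elementary route, and I believe it works. The paper fixes $p=q/2$ \emph{constant} and large, studies the spectrum of the resulting constant matrix $M=qA$ (one real eigenvalue near $q$, a complex pair of much smaller real part), then shows by an explicit characteristic-polynomial/discriminant computation that the compression of $e^{M}$ has a large real eigenvalue, and finally passes to $C_0^\infty$ coefficients via continuity of the $T$-exponent in $L_1$ together with continuity of simple spectra (its Theorems 4.1 and 5.1). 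Your two-pulse construction with $\supp p$ preceding $\supp q$ sidesteps all of that: the identity $\Phi_{13}=0$ is exact (with $q\equiv0$ the pair $(h_1,h_2)$ solves a closed homogeneous system from zero data; afterwards $p\equiv0$ keeps $h_1\equiv0$), so the compression is genuinely triangular, its eigenvalues are real by inspection, and the amplification $\Phi_{12}\asymp e^{\int q}$ is exact because $h_1$ decouples on $\supp q$. The two points you flag as routine really are: $h_1\neq0$ after the $p$-pulse follows from the Duhamel formula $h_1=-\int e^{-\lambda^{-2}(\cdot-s)}p\,h_2\,ds$ with $p\ge0$ and $h_2>0$ (guaranteed for $\|p\|_{L_1}$ small, since $2h_1^2+h_2^2$ is nonincreasing there), and $|\Phi_{22}|,|\Phi_{23}|$ stay bounded uniformly in $q$ because $2h_2^2+h_3^2$ is nonincreasing on $\supp q$, so $|\Phi_{23}|<1<R<|\Phi_{12}|$ and the eigenvector formula is well defined. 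What your approach buys is the elimination of the entire constant-coefficient spectral analysis and the perturbation machinery; what the paper's buys is an explicit single pair $(p,q)$ of near-constant controls and quantitative control of \emph{both} eigenvalues of $\tilde B$, which it does not actually need for Theorem \ref{t31}.
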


This theorem is proven in \S \ref{s5}.
Now we take $T=\frac1{\la^2-1}$
and divide the interval $(0,T)$ into an infinite set of subintervals.
Let $t_n = \frac1{(\la^2-1)\la^{2n}}$, then
$$
t_{n-1} - t_n = \la^{-2n}, \qquad (0,T) = \cup_{n=1}^\infty [t_n, t_{n-1}) .
$$
Suppose the functions  $p,q,h_1,h_2,h_3$ are given by the theorem \ref{t31} with sufficiently large $\rho$;
the value of $\rho$ will be chosen later.
Functions $v_n$ and $g_n$ will "start"\ not at $0$, but at the moment of time $t_{n+1}$.
Namely, we take
\begin{equation}
\label{vn}
v_n(t) = \left\{\begin{array}{lr}
0,& t < t_{n+1},\\
\lambda^{(2-\beta) (n+1)}p(\lambda^{2n+2}(t - t_{n+1})),&  t_{n+1} < t < t_n,\\
-\lambda^{(2-\beta) n}q(\lambda^{2n}(t - t_n)),&  t_n < t < t_{n-1},\\
0, & t > t_{n-1};
\end{array}\right.
\end{equation}

\begin{equation}
\label{gn}
g_n(t) = \left\{\begin{array}{lr}
0,& t < t_{n+1},\\
\rho^{-n-1}h_1(\lambda^{2n+2}(t - t_{n+1})),&  t_{n+1} < t < t_n,\\
\rho^{-n}h_2(\lambda^{2n}(t - t_n)),&  t_n < t < t_{n-1},\\
\rho^{-n+1}h_3(\lambda^{2n-2}(t - t_{n-1})),&  t_{n-1} < t < t_{n-2},\\
\rho^{-n+1}h_3(1) e^{-\la^{2n} (t-t_{n-2})},& t > t_{n-2}.
\end{array}\right.
\end{equation}
It is clear that $v_n \in C_0^\infty (0,T)$ and that the functions
$g_n$ are piecewise smooth, continuous at $t_{n+1}$, $t_n$, $t_{n-1}$, $t_{n-2}$ due to \eqref{31}, \eqref{32},
and therefore, $g_n \in W_2^1 (0,T)$.
It is also clear that 
$$
v_n(0) = g_n(0) = 0 \qquad \text{ for all} \  n,
$$
and that
\begin{equation}
\label{345}
v_n(t) = O (\la^{(2-\be)n}), \quad \dot v_n(t) = O (\la^{(4-\be)n}),
\quad g_n(t) = O (\rho^{-n}), \quad  n \to \infty .
\end{equation}

\begin{lemma}
\label{l32}
Suppose functions $v_n$ and $g_n$ are defined by the formulas \eqref{vn} and \eqref{gn} respectively.
Then
$$
\dot g_n + \lambda^{2n}g_n - 2\lambda^{\beta n} v_{n-1}g_{n-1} + \lambda^{\beta(n+1)}v_ng_{n+1} + \lambda^{\beta(n+1)}v_{n+1}g_n= 0 ,
$$
i.e. the third equation from \eqref{new_system} is satisfied.
\end{lemma}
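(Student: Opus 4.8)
The plan is to verify the identity by direct substitution, exploiting the fact that the supports of the $v_k$ and $g_k$ are staggered in time so that on each subinterval only a handful of terms survive. First I would record the supports: from \eqref{vn} the function $v_n$ is supported in $[t_{n+1},t_{n-1}]$, while from \eqref{gn} the function $g_n$ vanishes for $t<t_{n+1}$ and is nonzero on the four consecutive intervals $(t_{n+1},t_n)$, $(t_n,t_{n-1})$, $(t_{n-1},t_{n-2})$ and $(t_{n-2},T)$. Since $g_n\in W_2^1(0,T)$, it suffices to check the equation on each of these open intervals (and on $(0,t_{n+1})$, where every term vanishes because $g_n\equiv 0$ and $v_n\equiv 0$). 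On each interval I would introduce the rescaled variable $\tau=\la^{2k}(t-t_k)\in(0,1)$ attached to that interval and read off, from \eqref{vn} and \eqref{gn}, which pieces of $v_{n-1},v_n,v_{n+1},g_{n-1},g_n,g_{n+1}$ are active.

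Concretely, on $(t_n,t_{n-1})$ (with $\tau=\la^{2n}(t-t_n)$) one finds $g_n=\rho^{-n}h_2(\tau)$, $g_{n-1}=\rho^{-n}h_1(\tau)$, $g_{n+1}=\rho^{-n}h_3(\tau)$, $v_{n-1}=\la^{(2-\be)n}p(\tau)$, $v_n=-\la^{(2-\be)n}q(\tau)$, while $v_{n+1}\equiv 0$; the remaining intervals are analogous. Substituting these into
\[
\dot g_n + \la^{2n}g_n - 2\la^{\be n} v_{n-1}g_{n-1} + \la^{\be(n+1)}v_ng_{n+1} + \la^{\be(n+1)}v_{n+1}g_n = 0
\]
and dividing through by the common prefactor $\rho^{-n}\la^{2n}$, the whole identity collapses to the second equation of \eqref{31}, namely $\dot h_2 = 2p h_1 - h_2 + \la^{\be} q h_3$. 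The same computation on $(t_{n+1},t_n)$ (prefactor $\rho^{-n-1}\la^{2n+2}$) reproduces the first equation of \eqref{31}, and on $(t_{n-1},t_{n-2})$ (prefactor $\rho^{-n+1}\la^{2n-2}$) the third; on the tail $(t_{n-2},T)$ all three of $v_{n-1},v_n,v_{n+1}$ vanish, so the equation degenerates to $\dot g_n + \la^{2n} g_n = 0$, which the exponential factor in \eqref{gn} satisfies by construction.

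The only delicate point is the bookkeeping of the exponents of $\la$: one must check that the powers carried by each coupling product combine to leave exactly the coefficients appearing in \eqref{31}. The decisive cancellations involve $\be$. For instance, in $\la^{\be(n+1)}v_ng_{n+1}$ on $(t_n,t_{n-1})$ the exponent is $\be(n+1)+(2-\be)n=2n+\be$, so after dividing by $\la^{2n}$ precisely the factor $\la^{\be}$ survives, matching the coefficient of $qh_3$; the analogous accounting in the other terms produces the coefficients $-\la^{-2}$, $1$, $\la^2$ and $2$. Thus the content of the lemma lies entirely in the design of the scaling exponents in \eqref{vn} and \eqref{gn} together with the definition \eqref{31} of $p,q,h_1,h_2,h_3$, and the verification reduces to a finite, interval-by-interval check once the staggering of supports is made explicit.
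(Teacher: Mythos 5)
Your proof is correct and follows essentially the same route as the paper's: an interval-by-interval verification using the rescaled variable $\tau$, where the staggered supports kill all but a few terms and the identity collapses to one of the three equations of \eqref{31} (and to $\dot g_n+\la^{2n}g_n=0$ on the tail $t>t_{n-2}$). The bookkeeping of the $\la$-exponents you describe matches the paper's computation exactly.
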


\begin{proof}
Let us denote the left-hand side as $G_n(t)$.
One can see that $G_n(t) = 0$ while $t<t_{n+1}$.

While $t_{n+1} < t < t_n$ we introduce a new variable
$\tau = \lambda^{2n+2}(t - t_{n+1}) \in [0;1]$.
By definition $v_{n-1} \equiv 0$ on this time iterval, so
\begin{multline*}
G_n (t) = \lambda^{2n+2}\rho^{-n-1}\dot h_1(\tau) + \lambda^{2n}\rho^{-n-1}h_1(\tau) \\
+ \lambda^{\beta(n+1)}\lambda^{(2-\beta) (n+1)}p(\tau)\rho^{-n-1}h_2(\tau) 
- \lambda^{\beta(n+1)}\lambda^{(2-\beta)(n+1)}q(\tau)\rho^{-n-1}h_1(\tau)\\
= \la^{2n+2} \rho^{-n-1} \left(\dot h_1(\tau) + \lambda^{-2}h_1(\tau) + p(\tau)h_2(\tau) - q(\tau)h_1(\tau)\right)= 0
\end{multline*}
due to the first equation in \eqref{31}.

While $t_n < t < t_{n-1}$ we introduce a new variable
$\tau = \lambda^{2n}(t - t_n) \in [0;1]$.
On this time interval $v_{n+1} \equiv 0$, so
$$
G_n (t) = \la^{2n} \rho^{-n} \left(\dot h_2(\tau) + h_2(\tau) - 2p(\tau)h_1(\tau) - \lambda^{\beta}q(\tau)h_3(\tau)\right) = 0 
$$
due to the second equation in \eqref{31}.

While $t_{n-1} < t < t_{n-2}$ we introduce $\tau = \lambda^{2n-2}(t - t_{n-1}) \in [0;1]$;
here $v_n \equiv v_{n+1} \equiv 0$, and therefore
$$
G_n (t) = \la^{2n-2} \rho^{-n+1} \left(\dot h_3(\tau) + \la^2 h_3(\tau) + 2\lambda^\beta q(\tau)h_2(\tau)\right) = 0
$$
due to the third equation in \eqref{31}.

And finally, while $t > t_{n-2}$ 
\begin{multline*}
G_n (t) = \dot g_n (t) + \lambda^{2n}g_n(t) 
= \rho^{-n-1} h_3 (1) \left( -\la^{2n} e^{-\la^{2n} (t-t_{n-2})} +  \la^{2n} e^{-\la^{2n} (t-t_{n-2})}\right) = 0. \qquad
\qedhere
\end{multline*}
\end{proof}

\begin{lemma}
\label{l33}
We define functions $f_n$ from the first equation in \eqref{new_system}
with $v_n$ and $g_n$, given by formulas \eqref{vn} and \eqref{gn}.
If $|\rho| \ge \la^\be$, then
\begin{equation*}
f_n(t) = \left\{\begin{array}{lr}
0,& t < t_{n+1},\\
O (\lambda^{(4-\be)n}),&  t_{n+1} < t < t_{n-2},\\
O (\la^{-\be n}),& t > t_{n-2}.
\end{array}\right.
\end{equation*}
\end{lemma}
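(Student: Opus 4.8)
The plan is to read off $f_n$ directly as the left-hand side of the first equation in \eqref{new_system} and to estimate separately each of its six terms
$$
\dot v_n,\quad \la^{2n}v_n,\quad \la^{\be n}v_{n-1}^2,\quad \la^{\be n}g_{n-1}^2,\quad \la^{\be(n+1)}v_nv_{n+1},\quad \la^{\be(n+1)}g_ng_{n+1}
$$
on each of the three time ranges. The only tools needed are the support information contained in \eqref{vn}, \eqref{gn} together with the scaling bounds \eqref{345}; the hypothesis $|\rho|\ge\la^\be$ will enter solely to control the two quadratic contributions coming from $g$.

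First I would record the supports. From \eqref{vn}, $v_m$ vanishes outside $(t_{m+1},t_{m-1})$, and from \eqref{gn}, $g_m$ vanishes for $t<t_{m+1}$. Hence for $t<t_{n+1}$ all of $v_n,v_{n-1},g_n,g_{n-1}$ vanish; the functions $v_{n+1},g_{n+1}$ may be nonzero, but they occur only in the products $v_nv_{n+1}$ and $g_ng_{n+1}$, which vanish because $v_n=g_n=0$. Thus every term is zero and $f_n\equiv0$ on $t<t_{n+1}$, which is the first line of the claim.

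Next, on $t_{n+1}<t<t_{n-2}$ I would show the $v$-terms dominate. By \eqref{345} the two linear terms satisfy $\dot v_n=O(\la^{(4-\be)n})$ and $\la^{2n}v_n=O(\la^{(4-\be)n})$. For the quadratic $v$-terms a short exponent computation using $v_m=O(\la^{(2-\be)m})$ gives $\la^{\be n}v_{n-1}^2=O(\la^{(4-\be)n})$ and $\la^{\be(n+1)}v_nv_{n+1}=O(\la^{(4-\be)n})$ as well. The $g$-terms are genuinely smaller: from $g_m=O(\rho^{-m})$ and $|\rho|\ge\la^\be$ one gets $\la^{\be n}g_{n-1}^2=O(\la^{\be n}\rho^{-2n})=O(\la^{-\be n})$ and $\la^{\be(n+1)}g_ng_{n+1}=O(\la^{\be(n+1)}\rho^{-(2n+1)})=O(\la^{-\be n})$, and since $4-\be>-\be$ these are absorbed into $O(\la^{(4-\be)n})$. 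Collecting the six bounds yields the middle line.

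Finally, on $t>t_{n-2}$ the supports recorded above show that $v_n$ (hence $\dot v_n$, $\la^{2n}v_n$ and $v_nv_{n+1}$) and $v_{n-1}$ all vanish, so only the two $g$-terms survive; these were already estimated by $O(\la^{-\be n})$ in the previous step, again using $g_m=O(\rho^{-m})$ and $|\rho|\ge\la^\be$, which gives the last line. The computation is entirely elementary, so the only real obstacle is the bookkeeping: one must keep track of the four breakpoints $t_{n+1},t_n,t_{n-1},t_{n-2}$ for each index and carry out the exponent arithmetic without error, the choice $|\rho|\ge\la^\be$ being exactly what makes the quadratic $g$-contributions no larger than $O(\la^{-\be n})$.
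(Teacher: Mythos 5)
Your proof is correct and follows essentially the same route as the paper's: the paper likewise observes that all terms vanish for $t<t_{n+1}$, deduces the middle bound directly from \eqref{345}, and for $t>t_{n-2}$ notes that only $-\lambda^{\beta n}g_{n-1}^2+\lambda^{\beta(n+1)}g_ng_{n+1}$ survives, estimated via $g_m=O(\rho^{-m})$ and $|\rho|\ge\lambda^\beta$. You simply spell out the term-by-term exponent arithmetic that the paper leaves implicit.
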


\begin{proof}
While $t<t_{n+1}$ we have $f_n(t)=0$ by construction.
The estimate $f_n(t)=O (\lambda^{(4-\be)n})$ follows from \eqref{345}.
The last estimate follows from \eqref{345} and the fact that
$$
f_n(t)= - \lambda^{\beta n}g_{n-1}(t)^2 + \lambda^{\beta(n+1)}g_n(t) g_{n+1}(t)
\qquad \text{with} \quad t > t_{n-2}. \quad \qedhere
$$
\end{proof}

\begin{cor}
\label{c34}
Under the conditions of Lemma \ref{l33}
$$
\sum_{n=1}^\infty \la^{-2n} \int_0^T f_n(t)^2 dt < \infty.
$$
\end{cor}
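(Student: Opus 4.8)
The plan is to feed the pointwise bounds of Lemma \ref{l33} into the integral, using the explicit lengths of the subintervals $[t_n,t_{n-1})$, so that each summand $\la^{-2n}\int_0^T f_n(t)^2\,dt$ is controlled by a geometric sequence in $n$. Since $f_n$ vanishes for $t<t_{n+1}$, I would first split
$$
\int_0^T f_n(t)^2\,dt = \int_{t_{n+1}}^{t_{n-2}} f_n(t)^2\,dt + \int_{t_{n-2}}^T f_n(t)^2\,dt ,
$$
and estimate the two pieces separately according to the two regimes in Lemma \ref{l33}.

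For the middle piece I use $|f_n(t)| = O(\la^{(4-\be)n})$ on $(t_{n+1},t_{n-2})$, together with the length of that interval, which follows directly from $t_k = \frac1{(\la^2-1)\la^{2k}}$:
$$
t_{n-2} - t_{n+1} = \frac{\la^4 - \la^{-2}}{(\la^2-1)\la^{2n}} = O(\la^{-2n}) .
$$
This yields $\int_{t_{n+1}}^{t_{n-2}} f_n^2\,dt = O(\la^{2(4-\be)n}\cdot\la^{-2n}) = O(\la^{(6-2\be)n})$, so after the prefactor $\la^{-2n}$ the contribution is $O(\la^{(4-2\be)n})$. For the tail piece I use $|f_n(t)| = O(\la^{-\be n})$ for $t>t_{n-2}$ and bound the interval length crudely by $T$, giving $\int_{t_{n-2}}^T f_n^2\,dt = O(\la^{-2\be n})$ and hence a contribution $O(\la^{-(2+2\be)n})$.

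Summing, I would conclude
$$
\sum_{n=1}^\infty \la^{-2n}\int_0^T f_n^2\,dt \le C\sum_{n=1}^\infty \left(\la^{(4-2\be)n} + \la^{-(2+2\be)n}\right),
$$
where both series are geometric. The tail series converges for any $\be>0$, and the dominant middle series converges precisely because $\be>2$ forces $4-2\be<0$. There is no genuine analytic obstacle here, as all the pointwise decay is already supplied by Lemma \ref{l33}; the only thing to get right is the bookkeeping of exponents in the middle region, where the growth $\la^{(4-\be)n}$ of $f_n$ competes against the two factors $\la^{-2n}$. That this competition is won exactly when $\be>2$ is the heart of the matter: it is the same threshold that separates the uniqueness of Theorem \ref{t03} from the non-uniqueness being constructed, and it explains why the right-hand side produced in \eqref{vn}--\eqref{gn} is admissible only in the regime $\be>2$.
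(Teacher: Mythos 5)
Your proposal is correct and follows essentially the same route as the paper: square the pointwise bounds from Lemma \ref{l33}, use that the interval $(t_{n+1},t_{n-2})$ has length $O(\la^{-2n})$, obtain $\int_0^T f_n^2\,dt = O(\la^{(6-2\be)n})$, and conclude convergence of $\sum_n \la^{-2n}\int_0^T f_n^2\,dt$ from $\be>2$. The exponent bookkeeping matches the paper's exactly.
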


\begin{proof}
Using the previous lemma, we get
$$
\int_0^T f_n(t)^2 dt  = \int_{t_{n+1}}^{t_{n-2}} O (\la^{(8-2\be)n}) dt 
+ \int_{t_{n-2}}^T O (\la^{-2\be n}) dt = O (\la^{(6-2\be)n}) .
$$
Therefore the series $\sum_{n=1}^\infty \la^{-2n} \int_0^T f_n(t)^2 dt$ converges because $\be >2$.
\end{proof}

\begin{rem}
Moreover the series $\displaystyle\sum_{n=1}^\infty \la^{-\ga n} \int_0^T f_n(t)^2 dt$ converges for all $\ga>6-2\be$.
\end{rem}

\vspace{10pt}

{\it Proof of the theorem \ref{t04}.}
By the theorem \ref{t31} with $R = \la^\be$ we find functions $p$, $q$, $h_1$, $h_2$, $h_3$, satisfying \eqref{31} and \eqref{32}.
Using them, we construct functions $v_n$ and $g_n$ by formulae \eqref{vn} and \eqref{gn}.
Let $u_n^\pm (t) = v_n (t) \pm g_n (t)$, 
functions $f_n(t)$ we define from the first equation of \eqref{new_system}.
Then by Lemma \ref{l32} the system \eqref{new_system} is satisfied, and thus, \eqref{system}
is satisfied for functions $u_n^\pm$ with $a_n = 0$.
Furthermore, $\displaystyle\sum_{n=1}^\infty \la^{-2n} \int_0^T f_n(t)^2 dt < \infty$ due to the corollary \ref{c34}.
All functions $u_n^\pm \in W_2^1(0,T)$ and
\begin{equation}
\label{36}
u_n^\pm (t) = \left\{\begin{array}{lr}
0,& t < t_{n+1},\\
O (\lambda^{(2-\be)n}),&  t_{n+1} < t < t_{n-1},\\
O (\la^{-\be n}),& t > t_{n-1},
\end{array}\right.
\end{equation}
due to \eqref{345}.
Therefore,
$$
\sup_{t\in [0,T]} \sum_{n=1}^\infty u_n^\pm (t)^2 < \infty,
$$
and
$$
\int_0^T u_n^\pm(t)^2 dt = 
\int_{t_{n+1}}^{t_{n-1}} O (\la^{(4-2\be)n}) dt 
+ \int_{t_{n-1}}^T O (\la^{-2\be n}) dt = O (\la^{(2-2\be)n}) ,
$$
wherefrom
$$
\sum_{n=1}^\infty \la^{2n} \int_0^T u_n^\pm (t)^2 dt < \infty .
$$
We are left with the energy estimate.

Multiplying \eqref{system} by $u_n$, substituting $u_n = u_n^\pm$ and integrating over the time, we get
\begin{eqnarray}
\label{37}
\frac12 u_n^\pm (t)^2 + \la^{2n} \int_0^t u_n^\pm (\tau)^2 d\tau 
- \la^{\be n}  \int_0^t u_{n-1}^\pm (\tau)^2 u_n^\pm (\tau)\, d\tau \\
+ \la^{\be (n+1)}  \int_0^t u_n^\pm (\tau)^2 u_{n+1}^\pm (\tau)\, d\tau 
= \int_0^t f_n (\tau) u_n^\pm (\tau)\, d\tau .
\nonumber
\end{eqnarray}
By virtue of \eqref{36} 
$$
 \la^{\be n}  \int_0^T \left| u_{n-1}^\pm (\tau)^2 u_n^\pm (\tau)\right| \, d\tau 
= \la^{\be n} \int_{t_{n+1}}^{t_{n-2}} O (\la^{(6-3\be)n}) d\tau 
+ \int_{t_{n-2}}^T O (\la^{-2\be n}) d\tau = O (\la^{(4-2\be)n}) ,
$$
and hence we can sum up the equation \eqref{37} with respect to $n$ from $1$ to $\infty$, 
due to the absolute convergence of all the series.
We get
\begin{equation}
\label{38}
\sum_{n=1}^\infty \left( u_n^\pm(t)^2 + 2 \la^{2n} \int_0^t u_n^\pm (\tau)^2 d\tau \right)
= 2 \sum_{n=1}^\infty \int_0^t f_n (\tau) u_n^\pm (\tau)\, d\tau .
\end{equation}
So, $\{u_n^+ (t)\}_{n=1}^\infty$ and  $\{u_n^- (t)\}_{n=1}^\infty$ 
are Leray-Hopf solutions of the problem \eqref{system}.
They are distinct because $g_n \not\equiv 0$.
\qed

Now, Theorem \ref{t31} is the only thing left to prove. 

\section{The case of constant coefficients}
\label{s4}
In this section we consider the system \eqref{31} with constant coefficients
$p$ and $q$, moreover with $p = q/2$,
and prove an analog of Theorem \ref{t31} for this case.
In the next section we show that the statement of the theorem \ref{t31} is continuous 
with respect to changes of $p$ and $q$ in $L_1$-norm, and thus prove it for some functions $p,q \in C_0^\infty (0,1)$.

If $p$ and $q$ are constant, then the system \eqref{31} can be transformed into
\begin{equation}
\label{4000}
\begin{cases}
\dot h (\tau) = M h(\tau), \\
h(0) = v,
\end{cases}
\end{equation}
where
\begin{equation*}
h(\tau) = 
\begin{pmatrix} h_1 (\tau)\\ h_2 (\tau) \\ h_3 (\tau)\end{pmatrix} , \qquad 
v = \begin{pmatrix}0\\ y \\ z\end{pmatrix} \in \R^3 ,
\end{equation*}
\begin{equation}
\label{400}
M = \begin{pmatrix}
-\lambda^{-2} + q & -p & 0\\
2p & -1 & \lambda^\beta q\\
0 & -2\lambda^\beta q & -\lambda^2 
\end{pmatrix} .
\end{equation}
Then $h(1) = e^M v$. 
To satisfy \eqref{32}, we need to find values of $p$ and $q$ such that
there are sufficiently large numbers in the spectrum of matrices $M$ and $e^M$.

The following fact is well known, the proof can be found for example in 
\cite[Chapter II]{Kato}.

\begin{theorem}
\label{t41}
Suppose $T$ is a real $n\times n$ matrix with a simple spectrum,
$$
\si (T) = \{\la_1(T), \dots, \la_n(T)\}, \qquad \la_j(T) \neq \la_k(T)\ \ \text{for} \ \  j\neq k .
$$
Then for any $\er>0$ there exists $\de>0$, such that
if $\|T-S\| < \de$, then $|\la_j(S)-\la_j(T)| < \er$ with appropriate numeration of the spectrum $S$.
Moreover, if all the eigenvectors $v_j(T)$, $j=1,\dots,n$, have $v_j(T)_1 = 1$, 
then all the corresponding eigenvectors $v_j(S)$ of matrix $S$ can be chosen in such a way that $v_j(S)_1 = 1$ and $|v_j(T) - v_j(S)| < \er$.
\end{theorem}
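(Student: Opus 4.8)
The natural framework here is the holomorphic functional calculus, which converts the hypothesis of a simple spectrum into smooth dependence of the associated spectral projections. The plan is first to fix $\er>0$ so small that the closed discs $\overline{B(\la_j(T),\er)}\subset\C$ are pairwise disjoint, and to let $\ga_j$ be the positively oriented boundary circle of $B(\la_j(T),\er)$. On each $\ga_j$ the characteristic polynomial $z\mapsto\det(T-zI)$ does not vanish, and since the coefficients of $\det(S-zI)$ are polynomials in the entries of $S$, this polynomial converges uniformly on the compact set $\cup_j\ga_j$ to $\det(T-zI)$ as $S\to T$. For $\de$ small enough, $\|T-S\|<\de$ forces $\det(S-zI)$ to stay nonzero on each $\ga_j$ and to be close to $\det(T-zI)$ there, so Rouch\'e's theorem gives that $\det(S-zI)$ has exactly one zero, counted with multiplicity, inside each $\ga_j$. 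Thus $S$ has a single, algebraically simple eigenvalue $\la_j(S)$ in $B(\la_j(T),\er)$; this produces the matching numeration with $|\la_j(S)-\la_j(T)|<\er$ and settles the first assertion.

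For the eigenvectors I would pass to the Riesz projections
\begin{equation*}
P_j(S) = \frac1{2\pi i}\oint_{\ga_j}(zI-S)^{-1}\,dz .
\end{equation*}
Because each $\ga_j$ avoids $\si(S)$ once $\|T-S\|<\de$, the resolvent $(zI-S)^{-1}$ is defined and depends continuously (indeed analytically) on $S$, uniformly for $z\in\ga_j$; hence $P_j(S)\to P_j(T)$ as $S\to T$. Each $P_j(S)$ is the rank-one spectral projection onto the eigenspace of $S$ for $\la_j(S)$, and $P_j(T)v_j(T)=v_j(T)$ since $v_j(T)$ spans $\ran P_j(T)$. Therefore $P_j(S)v_j(T)$ lies in $\ran P_j(S)$, so it is an eigenvector of $S$ for $\la_j(S)$ whenever it is nonzero, and $P_j(S)v_j(T)\to v_j(T)$ as $S\to T$.

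It then remains to normalize so that the first coordinate equals $1$. Since $v_j(T)_1=1$ by hypothesis, the continuity just established gives $\bigl(P_j(S)v_j(T)\bigr)_1\to1$, so after shrinking $\de$ the first coordinate is nonzero for every $j$. Setting
\begin{equation*}
v_j(S) := \frac{P_j(S)v_j(T)}{\bigl(P_j(S)v_j(T)\bigr)_1}
\end{equation*}
yields an eigenvector of $S$ for $\la_j(S)$ with $v_j(S)_1=1$, and this quotient converges to $v_j(T)$, so $|v_j(T)-v_j(S)|<\er$ after one final adjustment of $\de$.

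The only genuinely delicate point I expect is the control of the eigenvectors under the normalization $(\cdot)_1=1$: an unnormalized eigenvector is determined only up to scale, and without the hypothesis $v_j(T)_1=1$ nothing would prevent the first coordinate from degenerating. The Riesz-projection construction circumvents this by producing a canonical representative $P_j(S)v_j(T)$ that already converges to $v_j(T)$ before normalization, so the normalizing denominator stays bounded away from $0$; the simplicity of $\si(T)$ is exactly what guarantees that the projections are rank one and vary continuously, which is the crux of the whole argument.
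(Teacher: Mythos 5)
Your proof is correct; note that the paper itself gives no proof of this statement, only a citation to Kato, Chapter II, and your Rouch\'e-plus-Riesz-projection argument is precisely the standard one found there. The normalization step via $P_j(S)v_j(T)$ and division by its first coordinate is handled properly, so nothing is missing.
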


We remind that in our case $\la>1$, $\be>2$.
Components of a three-dimensional vector $v_j$ will be denoted by $x_j$, $y_j$, $z_j$.

\begin{lemma}
\label{l42}
Consider the matrix
\begin{equation*}
A_0 = \begin{pmatrix}
1&-1/2&0\\
1&0&\lambda^\beta\\
0&-2\lambda^\beta &0
\end{pmatrix} .
\end{equation*}
The spectrum of $A_0$ is simple and
$$
\si (A_0) = \{ \ka^0, w^0, \bar w^0 \} ,
$$
where $\ka^0$ is real, moreover $\frac34<\ka^0<1$;
$w^0$ is not real, $0< \re w^0 < \frac18$, $\im w^0 > 0$. 
The corresponding eigenvectors can be chosen in a form
$$
v_1^0, \quad v_2^0 + i v_3^0, \quad v_2^0 - i v_3^0,
$$
with $v_1^0, v_2^0, v_3^0 \in \R^3$,
$$
x_1^0 = x_2^0 = 1, \quad x_3^0 = 0, \qquad y_1^0 > 0, \quad y_3^0 < 0, \quad z_3^0 > 0.
$$
\end{lemma}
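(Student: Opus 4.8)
The plan is to reduce everything to the characteristic polynomial of the explicit matrix $A_0$ and then to read off the eigenvectors directly. Expanding $\det(A_0 - \mu I)$ along the first row gives
$$
\det(A_0 - \mu I) = (1-\mu)(\mu^2 + 2\la^{2\be}) - \tfrac{\mu}{2},
$$
so the eigenvalues are the roots of
$$
P(\mu) = \mu^3 - \mu^2 + \left(2\la^{2\be} + \tfrac12\right)\mu - 2\la^{2\be}.
$$
First I would observe that $P'(\mu) = 3\mu^2 - 2\mu + (2\la^{2\be} + \tfrac12)$ has discriminant $4 - 12(2\la^{2\be} + \tfrac12) = -2 - 24\la^{2\be} < 0$, hence $P' > 0$ and $P$ is strictly increasing. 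Thus $P$ has exactly one real root $\ka^0$, and the remaining two roots form a genuine complex-conjugate pair $w^0, \bar w^0$ with $\im w^0 \neq 0$; in particular all three eigenvalues are distinct and $\si(A_0)$ is simple.

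Next I would locate $\ka^0$ by evaluating $P$ at the two endpoints. One computes $P(1) = \tfrac12 > 0$ and $P(\tfrac34) = \tfrac{15}{64} - \tfrac12 \la^{2\be}$, which is negative because $\la > 1$ forces $\la^{2\be} > 1 > \tfrac{15}{32}$. By monotonicity and the intermediate value theorem, $\tfrac34 < \ka^0 < 1$. Since the coefficient of $\mu^2$ in $P$ equals $-1$, the sum of the roots is $1$, i.e. $\ka^0 + 2\re w^0 = 1$, whence $\re w^0 = \tfrac12(1 - \ka^0) \in (0, \tfrac18)$. Fixing the labelling so that $\im w^0 > 0$ then yields all the claims about the spectrum.

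Finally I would produce the eigenvectors from the first and third rows of $(A_0 - \mu I)v = 0$, which are independent for each of our $\mu$. Writing $v = (x,y,z)$ with $x = 1$, the first row gives $y = 2(1 - \mu)$ and the third gives $z = -2\la^{\be} y / \mu$. For $\mu = \ka^0$ this produces the real vector $v_1^0$ with $y_1^0 = 2(1 - \ka^0) > 0$. For $\mu = w^0$ the first component stays real, so $x_2^0 = 1$ and $x_3^0 = 0$, while $y = 2(1 - w^0)$ gives $y_3^0 = \im y = -2\im w^0 < 0$; and a short rationalisation of $z = -4\la^\be (1 - w^0)/w^0$ by $\bar w^0$ shows $\im z = 4\la^\be \im w^0 / |w^0|^2 > 0$, i.e. $z_3^0 > 0$. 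Setting $v_2^0 = \re v$ and $v_3^0 = \im v$ records exactly the asserted signs and normalisation.

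The argument has no deep obstacle, being an explicit $3\times3$ computation; the only points requiring care are the sign evaluation $P(\tfrac34) < 0$, which is precisely where the size of $\la^{2\be}$ (equivalently of the off-diagonal entries $\pm\la^\be$) enters, and the bookkeeping of the real and imaginary parts of $z$ needed to confirm $z_3^0 > 0$.
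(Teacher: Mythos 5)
Your proposal is correct and follows essentially the same route as the paper: the same characteristic polynomial, positivity of its derivative (you via the discriminant, the paper via a completed square — both immediate), the sign checks at $\mu=1$ and $\mu=\tfrac34$, Vieta's formula for $\re w^0$, and the identical eigenvector computations yielding $y_1^0=2(1-\ka^0)$, $y_3^0=-2\im w^0$, and $z_3^0=4\la^\be\im w^0/|w^0|^2$. No substantive differences.
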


\begin{proof}
The characteristic polynomial of the matrix $A_0$ has the form
\begin{equation*}
\chi (\alpha) = \alpha^3 - \alpha^2 + \left(\frac12 + 2\lambda^{2\beta}\right)\alpha - 2\lambda^{2\beta} .
\end{equation*}
Its derivative is positive everywhere on $\mathbb R$:
\begin{equation*}
\chi' (\alpha) = 3\alpha^2 - 2\alpha + \frac12 + 2\lambda^{2\beta} \ge \al^2 + 2\lambda^{2\beta} > 0 
\qquad \forall \ \al \in \R .
\end{equation*}
Hence, matrix $A_0$ has only one real eigenvalue and a complex conjugate pair of eigenvalues.
We denote them $\ka^0, w^0, \bar w^0$, with $\im w^0 > 0$.
By virtue of simple estimates
$$
\chi (1) = \frac12 > 0, \qquad 
\chi \left(\frac34\right) = \frac{15}{64} - \frac12\lambda^{2\beta}< 0 ,
$$
one has $\ka^0 \in (3/4, 1)$.
By Vieta's formulas, $\ka^0 + 2\re w^0 = 1$, therefore $\re w^0 \in (0, 1/8)$.

One can easily see that the first components of eigenvectors can not be zero, 
so they can be chosen unitary, i.e. $x_1^0 = x_2^0 = 1$, $x_3^0 = 0$.
Next, it follows from $A_0 v_1^0 = \ka^0 v_1^0$ that 
$$
y_1^0 = 2 (1-\ka^0) > 0.
$$
Finally, the equality $A_0 (v_2^0 + i v_3^0) = w^0 (v_2^0 + i v_3^0)$ implies
$$
y_3^0 = - 2 \im w_0 < 0 \quad \text{and} \quad 
z_3^0 = \im \frac{4 \la^\be (w^0-1)}{w^0} = \frac{4 \la^\be \im w^0}{|w^0|^2} > 0 .
\quad \qedhere
$$
\end{proof}

\begin{cor}
\label{c43}
There exist numbers $q_0$, $\mu>1$ and $\nu \in (0,1)$, such that for $q>q_0$, the matrix
\begin{equation*}
A = 
\begin{pmatrix}
1 - \frac{\lambda^{-2}}q & -\frac12 & 0\\
1 & -\frac1q & \lambda^\beta \\
0 & -2\lambda^\beta  & -\frac{\lambda^2}q
\end{pmatrix}
\end{equation*}
has a simple spectrum
$$
\si (A) = \{ \ka, w, \bar w \} , \qquad 
\frac34<\ka<1, \quad 0< \re w < \frac18, \quad \im w > 0 ,
$$
and the corresponding eigenvectors $v_1$, $v_2 + i v_3$, $v_2 - i v_3$ can be chosen in such a way that
\begin{equation}
\label{40}
x_1 = x_2 = 1, \quad x_3 = 0, 
\end{equation}
$$
y_1 > \frac{y_1^0}2, \quad y_3 < \frac{y_3^0}2, \quad z_3 > \frac{z_3^0}2 ,
$$
\begin{equation}
\label{405}
|v_1| + |v_2| + |v_3| \le \mu, \qquad z_3 - y_1 y_3 \ge \nu .
\end{equation}
\end{cor}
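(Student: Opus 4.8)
The plan is to treat $A$ as a small perturbation of $A_0$ and quote the continuity Theorem \ref{t41}. First I observe that $A - A_0$ is the diagonal matrix with entries $-\la^{-2}/q$, $-1/q$, $-\la^2/q$, so that $\|A - A_0\| = \la^2/q \to 0$ as $q \to \infty$. By Lemma \ref{l42} the matrix $A_0$ has a simple spectrum $\{\ka^0, w^0, \bar w^0\}$ with $\ka^0 \in (3/4, 1)$, $\re w^0 \in (0, 1/8)$, $\im w^0 > 0$, and its eigenvectors are normalized so that $x_1^0 = x_2^0 = 1$, $x_3^0 = 0$; equivalently, the real eigenvector $v_1^0$ and the complex eigenvector $v_2^0 + i v_3^0$ both have first component equal to $1$, which is precisely the normalization used in Theorem \ref{t41}.

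Next I apply Theorem \ref{t41} with $T = A_0$ and $S = A$. Since the three eigenvalues of $A_0$ are distinct, for $q$ large enough $A$ has three distinct eigenvalues $\ka, w, \bar w$ lying within any prescribed distance of $\ka^0, w^0, \bar w^0$; in particular the spectrum of $A$ stays simple. Because the requirements $3/4 < \ka < 1$, $0 < \re w < 1/8$, and $\im w > 0$ describe an open set containing the eigendata of $A_0$, they persist once $q$ exceeds some $q_0$, which settles the spectral part of the claim. The normalization clause of Theorem \ref{t41} yields eigenvectors of $A$ with first component $1$, i.e. $x_1 = 1$ for the real eigenvector and $x_2 + i x_3 = 1$, hence $x_2 = 1$, $x_3 = 0$, for the complex one; this is exactly \eqref{40}.

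Finally, the eigenvector-continuity part of Theorem \ref{t41} gives componentwise convergence $v_1 \to v_1^0$, $v_2 \to v_2^0$, $v_3 \to v_3^0$ as $q \to \infty$. Thus $y_1 \to y_1^0 > 0$, $y_3 \to y_3^0 < 0$, $z_3 \to z_3^0 > 0$, and since each limit is strictly separated from the respective threshold $y_1^0/2$, $y_3^0/2$, $z_3^0/2$, the three middle inequalities hold after enlarging $q_0$ if necessary. For \eqref{405} I take $\mu = 1 + |v_1^0| + |v_2^0| + |v_3^0|$, so that $|v_1| + |v_2| + |v_3| \to |v_1^0| + |v_2^0| + |v_3^0| < \mu$; and I note that $z_3 - y_1 y_3 \to z_3^0 - y_1^0 y_3^0$, a quantity that is strictly positive because $z_3^0 > 0$, $y_1^0 > 0$, $y_3^0 < 0$. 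Choosing any $\nu \in (0,1)$ below this positive limit and enlarging $q_0$ one last time yields $z_3 - y_1 y_3 \ge \nu$. Every condition being an open constraint satisfied in the limit $q \to \infty$, the argument presents no real difficulty; the only point requiring care is matching the conjugate-pair eigenvector normalization of Theorem \ref{t41} with the real coordinate description in \eqref{40}, which is handled as above.
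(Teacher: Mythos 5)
Your proposal is correct and follows essentially the same route as the paper: both treat $A$ as an $O(1/q)$ perturbation of $A_0$ and invoke Theorem \ref{t41} together with Lemma \ref{l42}, then pick $\mu$ and $\nu$ from the limiting quantities (the paper takes $\mu = 2(|v_1^0|+|v_2^0|+|v_3^0|)$ and $\nu = \min(z_3^0/2 - y_1^0y_3^0/4,\,1/2)$, which differ from your choices only in the explicit constants). Your extra care in matching the complex-eigenvector normalization $x_2 + i x_3 = 1$ with \eqref{40} is a point the paper leaves implicit.
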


\begin{proof}
Note that $\|A - A_0\| = O(\frac1q)$. 
Therefore for a sufficiently large $q_0$ all the inequalities follow from Lemma \ref{l42} and Theorem \ref{t41}.
For $\mu$ and $\nu$ one can take
$$
\mu = 2\left(|v_1^0| + |v_2^0| + |v_3^0|\right), \qquad 
\nu = \min\left(\frac{z_3^0}2 - \frac{y_1^0y_3^0}4; \frac12\right).
\quad \qedhere
$$
\end{proof}

\begin{lemma}
\label{l44}
Let $A$, $\mu$, $\nu$ be the matrix and the numbers from Corollary \ref{c43}.
Let $R>0$.
There exists a number $q_1$ such that for $q>q_1$ the matrix 
$$
B = e^{qA} = \exp 
\begin{pmatrix}
q -\lambda^{-2} & -q/2 & 0\\
q & -1 & \lambda^\beta q\\
0 & -2\lambda^\beta q & -\lambda^2
\end{pmatrix}
$$
has the same eigenvectors $v_1$, $v_2 + i v_3$, $v_2 - i v_3$,
as the matrix $A$, and the eigenvalues
$$
e^{q\ka} =: k, \quad  e^{qw} =: a + ib, \quad e^{q\bar w} = a - ib,
$$
\begin{equation}
\label{41}
B v_1 = kv_1, \quad B v_2 = av_2 - bv_3, \quad B v_3 = bv_2 + av_3 .
\end{equation}
Moreover,
\begin{equation}
\label{415}
k > \frac{5 \mu^3 R}{2\nu},
\end{equation}
\begin{equation}
\label{42}
|a| < \oo k, \qquad |b| < \oo k,
\end{equation}
where
\begin{equation}
\label{43}
\oo < \frac{\nu^2}{100 \mu^4} < \frac{\nu}{100\mu^2} < \frac1{100} .
\end{equation}
\end{lemma}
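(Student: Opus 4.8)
The plan is to exploit the fact that $B = e^{qA}$ is an entire function of the matrix $A$ — indeed the matrix inside the exponential in the statement is exactly $qA$ — so $B$ is a power series in $A$ and every eigenvector of $A$ is automatically an eigenvector of $B$, with eigenvalue the exponential of $q$ times the corresponding eigenvalue of $A$. First I would record that, by Corollary \ref{c43}, $A v_1 = \ka v_1$ and $A(v_2 + i v_3) = w (v_2 + i v_3)$; applying the functional calculus to $e^{qA}$ gives $B v_1 = e^{q\ka} v_1$ and $B(v_2 + i v_3) = e^{qw}(v_2 + i v_3)$. Setting $k = e^{q\ka}$ and $a + i b = e^{qw}$ and separating real and imaginary parts in the second relation yields precisely \eqref{41}. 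Thus $B$ has the claimed eigenvectors and eigenvalues for every $q$, and this structural part needs no choice of $q$.

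The substance of the lemma is the size comparison \eqref{415}--\eqref{42}, and it rests on a single inequality supplied by Corollary \ref{c43}: the real eigenvalue dominates the real part of the complex one, $\re w < \frac18 < \frac34 < \ka$. I would first fix $\oo$ to be any positive number below $\frac{\nu^2}{100\mu^4}$; the chain \eqref{43} then holds automatically, since $\mu > 1$ and $\nu \in (0,1)$ force $\nu/\mu^2 < 1$, which makes each of the three displayed inequalities the same true statement. With $\oo$ fixed, I would estimate $|a|, |b| \le |a + ib| = |e^{qw}| = e^{q\re w}$, so that
$$
\frac{|a|}{k}, \ \frac{|b|}{k} \ \le \ e^{q(\re w - \ka)} .
$$
Because $\re w - \ka < 0$, the right-hand side tends to $0$ as $q \to \infty$, giving \eqref{42} for all sufficiently large $q$.

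For \eqref{415} I would simply invoke $\ka > \frac34 > 0$, so $k = e^{q\ka} \to \infty$ and exceeds the fixed number $\frac{5\mu^3 R}{2\nu}$ once $q$ is large enough; taking $q_1$ to be the largest of the finitely many thresholds produced above makes all three estimates hold for $q > q_1$. There is no genuine obstacle here — the argument is the exponential separation of scales between $k$ and $a,b$. The only point to watch is that $\mu$, $\nu$, $\ka$, $w$ are the fixed data of Corollary \ref{c43} and do not move with $q$, so the one inequality $\re w < \ka$ suffices to drive both the blow-up of $k$ and the relative smallness of $a$ and $b$.
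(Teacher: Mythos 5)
Your argument is essentially the paper's: the structural part follows from the functional calculus, and the quantitative part from the exponential separation $k=e^{q\ka}$ versus $|a|,|b|\le e^{q\re w}$. One correction, though: your closing remark that $\ka$ and $w$ ``do not move with $q$'' is false --- the matrix $A$ of Corollary \ref{c43} contains entries of order $1/q$, so its spectrum does depend on $q$. Your limit argument ``$e^{q(\re w-\ka)}\to 0$ because $\re w-\ka<0$'' is therefore not justified as stated, since a $q$-dependent exponent could in principle approach $0$. The repair is immediate and is exactly what the paper does: Corollary \ref{c43} gives the \emph{uniform} bounds $\ka>\frac34$ and $\re w<\frac18$ for all $q>q_0$, hence $\re w-\ka<-\frac58$ uniformly, so $|a|/k,\ |b|/k\le e^{-5q/8}\to 0$ and $k\ge e^{3q/4}\to\infty$. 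With that substitution your proof is complete and coincides with the paper's.
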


\begin{proof}
As $\ka > 3/4$, one has $k > e^{3q/4}$.
Next, $\re w < 1/8$ yields 
$$
\max (|a|,|b|) \le |e^{qw}| = e^{q\re w} \le e^{q/8} \le e^{-5q/8} k .
$$
Therefore $\oo$ can be chosen to be $\oo = e^{-5q_1/8}$.
If one chooses $q_1$ so big that 
$$
e^{3q_1/4} > \frac{5 \mu^3 R}{2\nu} \quad \text{and} \quad 
e^{-5q_1/8} < \frac{\nu^2}{100 \mu^4},
$$
then the conditions \eqref{415}, \eqref{42} and \eqref{43} will be fulfilled.
\end{proof}

We fix $2p=q>q_1$, where $q_1$ is a number from Lemma \ref{l44}.
Suppose $h$ is a solution to the system \eqref{4000}, \eqref{400} with such $p$ and $q$.
Then the condition \eqref{32} is equivalent to the system
\begin{equation}
\label{44}
\left(B\begin{pmatrix}
0\\y\\z
\end{pmatrix}\right)_1 = \rho y, \qquad
\left(B\begin{pmatrix}
0\\y\\z
\end{pmatrix}\right)_2 = \rho z ,
 \end{equation}
where $B = e^M$ is a matrix from Lemma \ref{l44}.
We denote
\begin{equation}
\label{45}
\tilde B = 
\begin{pmatrix}
b_{12} & b_{13}\\
b_{22} & b_{23}
\end{pmatrix} .
\end{equation}
One can see that the existence of a non-trivial solution $\begin{pmatrix}0\\y\\z\end{pmatrix}$ of a system \eqref{44} 
is equivalent to $\rho \in \si (\tilde B)$.

\begin{lemma}
\label{l45}
Let $R>0$. 
Suppose $q_1$ and $B$ are defined by $R$ as in Lemma \ref{l44}.
Then for $q>q_1$ the matrix $\tilde B$ has two different eigenvalues $\rho_1$, $\rho_2$, and $\max(|\rho_1|, |\rho_2|) > R$.
\end{lemma}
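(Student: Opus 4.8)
The plan is to read off the two eigenvalues $\rho_1,\rho_2$ of $\tilde B$ from its trace and discriminant, exploiting that $B=e^{qA}$ is dominated by the rank-one part attached to its largest eigenvalue $k$. Write $P=(v_1\,|\,v_2\,|\,v_3)$ for the matrix of the real eigenvectors from Corollary \ref{c43}, and let $r_i=(r_{i1},r_{i2},r_{i3})$, $i=1,2,3$, denote the rows of $P^{-1}$, so $r_i v_j=\de_{ij}$. The relations \eqref{41} give the real spectral decomposition
\[
B=k\,v_1 r_1+a\,(v_2 r_2+v_3 r_3)+b\,(v_2 r_3-v_3 r_2),
\]
where $v_i r_j$ are $3\times3$ outer products (column times row). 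Because $x_3=0$ while $x_1=x_2=1$ by \eqref{40}, the two entries of $P^{-1}$ that matter are the single-term cofactors $r_{12}=-z_3/\det P$ and $r_{13}=y_3/\det P$. Hadamard's inequality with $|v_i|\le\mu$ from \eqref{405} bounds $|\det P|\le\mu^3$ and every cofactor by $2\mu^2$, so all entries of $r_2,r_3$ are $O(\mu^2/|\det P|)$.

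Extracting the four entries of $\tilde B$ from this decomposition, the leading (order-$k$) part is the rank-one matrix $k\binom{1}{y_1}(r_{12}\ \ r_{13})$, while the $a$- and $b$-contributions to each entry are $O(\oo k\mu^3/|\det P|)$ by \eqref{42}. First I would estimate the trace: to leading order $\tr\tilde B=b_{12}+b_{23}=k(r_{12}+y_1 r_{13})+O(\oo k\mu^3/|\det P|)=-k(z_3-y_1 y_3)/\det P+O(\oo k\mu^3/|\det P|)$. The factor $1/\det P$ is common to the main and error terms, so it cancels in their ratio, which is $O(\oo\mu^3/\nu)<1/10$ by \eqref{43}. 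Using $z_3-y_1 y_3\ge\nu$ and $|\det P|\le\mu^3$ from \eqref{405}, together with the lower bound \eqref{415} on $k$, this gives $|\tr\tilde B|\ge\tfrac{9}{10}k\nu/\mu^3>2R$. Since $\max(|\rho_1|,|\rho_2|)\ge\tfrac12|\rho_1+\rho_2|=\tfrac12|\tr\tilde B|$, we already obtain $\max(|\rho_1|,|\rho_2|)>R$.

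It then remains to see that the two eigenvalues are distinct and real, i.e. that the discriminant $D=(\tr\tilde B)^2-4\det\tilde B=(b_{12}-b_{23})^2+4\,b_{13}b_{22}$ is strictly positive. I would prove $D>0$ by showing $b_{13}b_{22}>0$. To leading order $b_{13}=k y_3/\det P$ and $b_{22}=-k y_1 z_3/\det P$, so $b_{13}b_{22}=-k^2 y_1 y_3 z_3/(\det P)^2$, which is positive because $y_1>0$, $y_3<0$, $z_3>0$ by Corollary \ref{c43}. This leading value is a fixed positive multiple of $k^2/(\det P)^2$ (its size controlled from below via the bounds $y_1>y_1^0/2$, $y_3<y_3^0/2$, $z_3>z_3^0/2$), whereas the corrections coming from the $a$- and $b$-terms carry a factor $\oo$; for $\oo$ as small as in \eqref{42}--\eqref{43} the leading term dominates, so $b_{13}b_{22}>0$ and $D>0$. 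Equivalently, one may note that the order-$k^2$ part of $\det\tilde B$ cancels (the leading $\tilde B$ is rank one), leaving $\det\tilde B$ negligible next to $(\tr\tilde B)^2$.

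The main obstacle is purely quantitative: one must verify that the $a$- and $b$-contributions (corrections of relative size $\oo$) neither spoil the lower bound on $|\tr\tilde B|$ nor the positivity of $b_{13}b_{22}$. This is exactly the purpose of the hierarchy $\oo<\nu^2/(100\mu^4)<\nu/(100\mu^2)<1/100$ recorded in \eqref{43}, used together with the fixed sign and size data on $y_1,y_3,z_3$ from Lemma \ref{l42} and Corollary \ref{c43}. Two structural facts make the bookkeeping close: the vanishing first component $x_3=0$, which turns $r_{12},r_{13}$ into single-term cofactors and lets the awkward factor $\det P$ cancel throughout the estimates; and the rank-one form of the leading term $k\,v_1 r_1$, which forces the $O(k^2)$ part of $\det\tilde B$ to disappear and thereby separates the large eigenvalue $\approx\tr\tilde B$ from the small one.
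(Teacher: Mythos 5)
Your argument is in substance the same as the paper's: both reduce the lemma to showing that the quadratic $\rho^2-(\tr\tilde B)\rho+\det\tilde B$ has two real roots whose sum is large in modulus, and both get this from the fact that $B$ is dominated by the rank-one piece $k\,v_1r_1$, so that $\tr\tilde B\approx -k(z_3-y_1y_3)/\det P$ while the $O(k^2)$ part of $\det\tilde B$ cancels. The paper packages this as the vanishing of the determinant $U\rho^2+V\rho+W$ of a $3\times3$ system in the coefficients of $v=c_1v_1+c_2v_2+c_3v_3$; since that quadratic is $\det P$ times the characteristic polynomial of $\tilde B$, your trace/determinant computation is the same object viewed through $P^{-1}$. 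Your treatment of the trace is complete and the constants close: the relative error is at most a small multiple of $\oo\mu^3/\nu<1/100$ by \eqref{43}, and $\tfrac{9}{10}k\nu/\mu^3>2R$ follows from \eqref{415}.

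The gap is in the discriminant step. You assert that for $\oo$ as in \eqref{42}--\eqref{43} the corrections cannot destroy $b_{13}b_{22}>0$, but the leading term $-k^2y_1y_3z_3/(\det P)^2$ is bounded below only by the fixed constant $y_1^0|y_3^0|z_3^0/(8(\det P)^2)$, and neither this constant nor separate lower bounds on $|y_3|$ and $y_1z_3$ occur anywhere in the hierarchy \eqref{43}: the number $\nu$ controls $z_3-y_1y_3$, which is a different quantity (as far as the recorded inequalities know, it can be of order $1$ while $|y_3|$ is tiny). Your fallback --- $4|\det\tilde B|<(\tr\tilde B)^2$ via cancellation of the $k^2$ part --- has the same problem at the level of constants: bounding all entries of $r_2,r_3$ by $\mu^2/|\det P|$ gives $|\det\tilde B|\lesssim\oo k^2\mu^5/(\det P)^2$ against $(\tr\tilde B)^2\ge 0.81\,k^2\nu^2/(\det P)^2$, which requires $\oo\lesssim\nu^2/\mu^5$, one power of $\mu$ more than \eqref{43} supplies (and $\mu\ge 2$). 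The paper avoids this loss by computing $W=\det P\cdot\det\tilde B$ explicitly: thanks to $x_1=x_2=1$, $x_3=0$ it collapses to $(a^2+b^2)y_3-k(ay_3+by_2-by_1)$, which carries only a single factor of $\mu$, so that $4|UW|\le 4\nu^2k^2/25<V^2$ is exactly what \eqref{43} delivers. Your gap is fixable --- either carry out the explicit cofactor computation as the paper does, or strengthen the choice of $q_1$ so that $\oo$ is also small relative to $y_1^0|y_3^0|z_3^0/\mu^2$ --- but as written the sentence ``the leading term dominates'' is not justified by the constants you invoke, and with the $q_1$ fixed by Lemma \ref{l44} you are not free to shrink $\oo$ further.
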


\begin{proof}
We will search for a solution of \eqref{44} in a form
$$
v = \begin{pmatrix}0\\y\\z\end{pmatrix} = c_1v_1 + c_2v_2 + c_3v_3, 
$$
where $v_1, v_2,v_3$ satisfy \eqref{41}, $c_1,c_2,c_3 \in \R$.
Then we get the following system 
\begin{equation*}
\left\{\begin{array}{l}
c_1 x_1 + c_2 x_2 + c_3 x_3 = 0,\\
c_1(\rho y_1 - kx_1) + c_2(\rho y_2 - ax_2 + bx_3) + c_3(\rho y_3 - bx_2 - ax_3) = 0,\\
c_1(\rho z_1 - ky_1) + c_2(\rho z_2 - ay_2 + by_3) + c_3(\rho z_3 - by_2 - ay_3) = 0,
\end{array}\right.
\end{equation*}
of three linear equations on $c_i$.
Its determinant is equal to
\begin{multline*}
\mathrm{det} = x_1\left((\rho y_2 - ax_2 + bx_3)(\rho z_3 - by_2 - ay_3) - (\rho z_2 - ay_2 + by_3)(\rho y_3 - bx_2 - ax_3)\right)  \\ - 
x_2\left((\rho y_1 - kx_1)(\rho z_3 - by_2 - ay_3) - (\rho z_1 - ky_1)(\rho y_3 - bx_2 - ax_3)\right)  \\ + 
x_3\left((\rho y_1 - kx_1)(\rho z_2 - ay_2 + by_3) - (\rho z_1 - ky_1)(\rho y_2 - ax_2 + bx_3)\right) \\
= U \rho^2 + V \rho + W, 
\end{multline*}
where
\begin{eqnarray*}
U = \det (v_1, v_2, v_3) \neq 0 ,\\
V = (k-a) (z_3 - y_1y_3) + b (y_1y_2 - y_2^2 - y_3^2 + z_2 - z_1) ,\\
W = (a^2+b^2) y_3 - k (ay_3 + by_2 - by_1) ;
\end{eqnarray*}
in the last two equalities we used the relations \eqref{40}.
Using \eqref{405}, \eqref{42} and \eqref{43} we get
\begin{eqnarray*}
|U| \le |v_1| | v_2| |v_3| \le \mu^3 , \\
V \ge \frac{9k\nu}{10} - 5 |b| \mu^2 \ge \frac{4k\nu}5 , \\
|W| \le \mu (a^2 + b^2 + k|a| + 2k|b|) 
\le \mu (2\oo^2 + 3\oo) k^2 \le \frac{\nu^2k^2}{25\mu^3} .
\end{eqnarray*}
Therefore
$$
V^2 - 4 U W \ge \frac{16k^2\nu^2}{25} - \frac{4k^2\nu^2}{25} > 0,
$$
which means that the discriminant is positive and the equation
$$
U \rho^2 + V \rho + W = 0
$$
has two distinct roots.
By Vieta's formulas, one can see that
$$
|\rho_1 + \rho_2| = \frac{V}{|U|} \ge \frac{4k\nu}{5\mu^3} > 2 R
$$
due to \eqref{415}. 
Thus, there is a root $\rho$ such that $|\rho| > R$.
\end{proof}

\section{Proof of Theorem \ref{t31}}
\label{s5}
Let $A \in L_1 \left((a,b); \mathrm{Mat} (n\times n, \R)\right)$.
Consider on the interval $(a,b)$ the following Cauchy problem
\begin{equation*}
\begin{cases}
\dot h (t) = A(t) h(t), \\
h(a) = h_0;
\end{cases}
\end{equation*}
here $h(t) \in\R^n$.
It is equivalent to the integral equation
\begin{equation}
\label{51}
h(t) = h_0 + \int_a^t A(\tau) h(\tau) \, d\tau .
\end{equation}
It is well known (see for example \cite[Chapter III, \S 31]{MSch}), that there exists a unique solution
$h \in C\left([a,b]; \R^n\right)$
(and therefore $h \in W_1^1\left((a,b); \R^n\right)$).
Iterating \eqref{51}, we get 
\begin{equation}
\label{52}
h(t) =  \sum_{m=0}^\infty \frac1{m!}\int\limits_{(a;t)^m} 
\mathcal T\left(A(\tau_1),\ldots, A(\tau_m)\right) \, d\tau_1\ldots d\tau_m\, h_0 ,
\end{equation}
where the symbol $\mathcal T$ denotes a chronological ordering
$$
\mathcal T\left(A(\tau_1),\ldots, A(\tau_m)\right) = A(\si_1) \ldots A(\si_m) ,
$$
$\si_1 \ge \si_2 \ge \dots \ge \si_m$ is a non-increasing permutation of the arguments $\tau_1, \dots, \tau_m$.
From \eqref{52} follows a well known (can be found in \cite{MSch}) estimate
$$
\|h\|_C \le e^{\|A\|_{L_1}} |h_0| .
$$
We denote by $B$ a linear operator mapping initial data $h_0$ to the final value $h(b)$. 
Matrix $B \in \mathrm{Mat} (n\times n, \R)$ is a $T$-exponent of the matrix function $A$:
$$
B = \Texp(A) \equiv  \sum_{m=0}^\infty \frac1{m!}\int\limits_{(a;b)^m} 
\mathcal T\left(A(\tau_1),\ldots, A(\tau_m)\right) \, d\tau_1\ldots d\tau_m .
$$
The map
$$
\Texp : L_1 \left((a,b); \mathrm{Mat} (n\times n)\right) \to  \mathrm{Mat} (n\times n) 
$$
is continuous.

\begin{theorem}
\label{t51}
Let $A_1, A_2 \in  L_1 \left((a,b); \mathrm{Mat} (n\times n)\right)$,
$B_1 = \Texp(A_1)$, $B_2 = \Texp(A_2)$.
Then
$$
\|B_1 - B_2\| \le \exp\left(\max(\|A_1\|_{L_1}, \|A_2\|_{L_1})\right) 
 \int\limits_a^b \| A_1(\tau) - A_2(\tau)\|\, d\tau .
$$
\end{theorem}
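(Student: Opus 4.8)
The plan is to join $A_1$ and $A_2$ by the affine homotopy $A_s := A_2 + s\,(A_1-A_2)$, $s\in[0,1]$, and to recover $B_1-B_2$ as $\int_0^1 \frac{d}{ds}\Texp(A_s)\,ds$. Write $D:=A_1-A_2$ and let $\Phi_s(t_2,t_1)$ denote the transition matrix of $\dot h=A_s h$ on $(a,b)$, i.e. the $T$-exponent of $A_s$ over $[t_1,t_2]$; thus $\Phi_s(b,a)=\Texp(A_s)$. The first step is the differentiation formula
\[
\frac{d}{ds}\,\Phi_s(b,a)=\int_a^b \Phi_s(b,\tau)\,D(\tau)\,\Phi_s(\tau,a)\,d\tau .
\]
I would obtain it either by differentiating the Volterra series \eqref{52} term by term in $s$ (each $A_s(\sigma)$ contributes $\partial_s A_s=D$, and collecting the position of $D$ regroups the factors at times above $\tau$ into $\Phi_s(b,\tau)$ and those at times below $\tau$ into $\Phi_s(\tau,a)$), or, more economically, from the ODE itself: differentiating $\Phi_s(t,a)=I+\int_a^t A_s\,\Phi_s(\cdot,a)$ in $s$ shows that $P_s:=\partial_s\Phi_s(\cdot,a)$ solves $\dot P_s=A_sP_s+D\,\Phi_s(\cdot,a)$ with $P_s(a)=0$, and variation of parameters gives $P_s(t)=\int_a^t\Phi_s(t,\tau)D(\tau)\Phi_s(\tau,a)\,d\tau$; then set $t=b$.

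The second step is the estimate that yields the sharp constant. From the bound $\|h\|_C\le e^{\|A\|_{L_1}}|h_0|$ recalled just above, the transition matrix obeys $\|\Phi_s(b,\tau)\|\le e^{\|A_s\|_{L_1(\tau,b)}}$ and $\|\Phi_s(\tau,a)\|\le e^{\|A_s\|_{L_1(a,\tau)}}$. The crucial point is that, because the \emph{same} generator $A_s$ appears on both sides of $D(\tau)$, the two exponents add over complementary intervals,
\[
\|\Phi_s(b,\tau)\|\,\|\Phi_s(\tau,a)\|\le e^{\|A_s\|_{L_1(\tau,b)}+\|A_s\|_{L_1(a,\tau)}}=e^{\|A_s\|_{L_1(a,b)}},
\]
so no factor $2$ enters the exponent. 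By convexity of the norm, $\|A_s\|_{L_1}\le (1-s)\|A_2\|_{L_1}+s\,\|A_1\|_{L_1}\le \max(\|A_1\|_{L_1},\|A_2\|_{L_1})$ uniformly in $s$.

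Combining the two steps,
\[
\Big\|\frac{d}{ds}\Phi_s(b,a)\Big\|\le e^{\max(\|A_1\|_{L_1},\|A_2\|_{L_1})}\int_a^b\|D(\tau)\|\,d\tau
\]
for every $s\in[0,1]$, and integrating this over $s\in[0,1]$ gives exactly the asserted inequality. The main obstacle is the first step, namely making the differentiation of $\Texp(A_s)$ in the parameter $s$ rigorous: one must justify the term-by-term differentiation and regrouping of the series \eqref{52} (permissible since, by the $L_1$ bound, it converges absolutely and uniformly in $s\in[0,1]$), or invoke smooth dependence on parameters for the variation-of-parameters argument. Everything after the derivative formula is routine estimation, and the homotopy is precisely what replaces the naive $e^{2\max}$ — which a one-shot Duhamel comparison of $A_1$ with $A_2$ would produce, since there the two factors carry the \emph{different} generators and the exponents no longer telescope — by the sharp $e^{\max}$.
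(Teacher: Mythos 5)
Your proof is correct, but it takes a genuinely different route from the paper's. The paper works entirely at the level of the Dyson series: it applies the exact telescoping identity $A_1(\tau_1)\cdots A_1(\tau_m)-A_2(\tau_1)\cdots A_2(\tau_m)=\sum_{k=1}^m A_1(\tau_1)\cdots A_1(\tau_{k-1})\,(A_1(\tau_k)-A_2(\tau_k))\,A_2(\tau_{k+1})\cdots A_2(\tau_m)$ to the $m$-th term of $\Texp$, bounds that term by $m L^{m-1}\|A_1-A_2\|_{L_1}/m!$ with $L=\max(\|A_1\|_{L_1},\|A_2\|_{L_1})$ (each of the $m-1$ surviving factors, whether it carries $A_1$ or $A_2$, is bounded by $L$, so no factor of $2$ appears even though the two generators differ), and sums. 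This is purely algebraic and needs no differentiability in a parameter. Your homotopy argument reaches the same sharp constant by a different mechanism: the derivative formula $\frac{d}{ds}\Phi_s(b,a)=\int_a^b\Phi_s(b,\tau)D(\tau)\Phi_s(\tau,a)\,d\tau$ plus additivity of $\|A_s\|_{L_1}$ over complementary subintervals, followed by convexity and integration in $s$. Both are valid; what your approach buys is a structurally transparent one-line estimate once the derivative formula is in hand, and it generalizes immediately to bounding $\Texp$ along any path of generators. What it costs is exactly the obstacle you flag: you must justify differentiating $\Texp(A_s)$ in $s$ (term-by-term differentiation of the series is fine, since the differentiated $m$-th term is dominated by $L^{m-1}\|D\|_{L_1}/(m-1)!$ uniformly in $s$, so the differentiated series converges uniformly). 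Note that if you carry out that term-by-term differentiation explicitly, the position of the inserted $D$ reproduces the paper's telescoping sum with $A_s$ on both sides, so the two proofs are close cousins; the paper simply observes that the telescoping identity already holds exactly at $s$-endpoints, making the homotopy unnecessary. Your closing remark is also accurate: a one-shot Duhamel comparison of the two transition matrices would place $A_1$ on one side of $D(\tau)$ and $A_2$ on the other, and the exponents $\|A_1\|_{L_1(\tau,b)}+\|A_2\|_{L_1(a,\tau)}$ then only combine to $2L$ in general, which is precisely why either the series-level telescoping or the homotopy is needed for the stated constant.
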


However, we could not find the reference in the literature. 
For the convenience of the reader the proof of the theorem is given at the end of this section.

\vspace{7pt}

Let us come back to the proof of the main result.

{\it Proof of Theorem \ref{t31}.}
Lemmas \ref{l44} and \ref{l45} provide us with numbers $q_*$ and $\rho_*$, $|\rho_*| > R$, 
and a non-zero vector $\begin{pmatrix}y_*\\z_*\end{pmatrix} \in \R^2$, such that
$$
\tilde B_* \begin{pmatrix}y_*\\z_*\end{pmatrix} = \rho_* \begin{pmatrix}y_*\\z_*\end{pmatrix} ,
$$
where matrix $\tilde B_*$ is defined by the formula \eqref{45} using the matrix
$$
B_* = e^{M_*}, \qquad M_* = \begin{pmatrix}
q_* -\lambda^{-2} & -q_*/2 & 0\\
q_* & -1 & \lambda^\beta q_*\\
0 & -2\lambda^\beta q_* & -\lambda^2 
\end{pmatrix} .
$$
Note that the eigenvalues of $\tilde B_*$ are distinct.

Now we find functions $p, q \in C_0^\infty (0,1)$, that are close to constants $q_*/2$ and $q_*$ in the sense of $L_1 (0,1)$-norm.
We construct the corresponding matrix function $M(t)$ using \eqref{400}
and matrix $B = \Texp(M)$. 
By Theorem \ref{t51} the norm of the difference $\|B-B_*\|$ (and therefore the norm $\|\tilde B - \tilde B_*\|$) 
can be made arbitrarily small by taking the functions $p$ and $q$ close to the numbers $q_*/2$ and $q_*$.
Now Theorem \ref{t41} guarantees that the spectrum of the matrix $\tilde B$ consists of two real numbers, 
and one them is such $\rho$ that $|\rho| > R$, 
$$
\tilde B \begin{pmatrix}y\\z\end{pmatrix} = \rho \begin{pmatrix}y\\z\end{pmatrix} , \qquad y^2 + z^2 > 0 .
$$
Therefore, the solution to the system \eqref{31} with such $p,q,y,z$ satisfies the condition \eqref{32}.
\qed

\subsection{Continuity of $T$-exponent}

\begin{proof}[Proof of Theorem \ref{t51}]
We have
\begin{eqnarray*}
A_1 (\tau_1) \dots A_1 (\tau_m) - A_2 (\tau_1) \dots A_2 (\tau_m)  \\
= \sum_{k=1}^m A_1 (\tau_1) \dots A_1 (\tau_{k-1}) \left(A_1(\tau_k) - A_2(\tau_k)\right)
A_2 (\tau_{k+1})  \dots A_2 (\tau_m) ,
\end{eqnarray*}
wherefrom
\begin{multline*}
\left\| \int\limits_{(a;b)^m} \left(\mathcal T\left(A_1(\tau_1),\ldots, A_1(\tau_m)\right) 
- \mathcal T\left(A_2(\tau_1),\ldots, A_2(\tau_m)\right) \right) d\tau_1\ldots d\tau_m \right\| \\
\le \sum_{k=1}^m \int\limits_{(a;b)^m} \| A_1 (\tau_1)\| \dots \|A_1 (\tau_{k-1})\| \left\|A_1(\tau_k) - A_2(\tau_k)\right\|
\|A_2 (\tau_{k+1})\|  \dots \|A_2 (\tau_m) \| d\tau_1\ldots d\tau_m \\
= \sum_{k=1}^m \|A_1\|_{L_1}^{k-1} \|A_1-A_2\|_{L_1} \|A_2\|_{L_1}^{m-k} 
\le m L^{m-1} \|A_1-A_2\|_{L_1} ,
\end{multline*}
with $L = \max(\|A_1\|_{L_1}, \|A_2\|_{L_1})$.
Therefore
\begin{multline*}
\|B_1 - B_2\| \le 
 \sum_{m=1}^\infty \frac1{m!} \left\| \int\limits_{(a;b)^m} \left(\mathcal T\left(A_1(\tau_1),\ldots, A_1(\tau_m)\right) 
- \mathcal T\left(A_2(\tau_1),\ldots, A_2(\tau_m)\right)\right) d\tau_1\ldots d\tau_m \right\| \\
\le \sum_{m=1}^\infty \frac{L^{m-1}}{(m-1)!} \|A_1-A_2\|_{L_1}
= e^L \|A_1-A_2\|_{L_1} . \quad \qedhere
\end{multline*}
\end{proof}


\end{document}